\newtheorem*{notation}{Notation}
\newtheorem{question}{Question}
\newcommand{\ignore}[1]{}
\newcommand{\structure}[1]{\ensuremath{\left(#1\right)}\xspace}
\newcommand{\set}[1]{\ensuremath{\left\{#1\right\}}\xspace}
\newcommand{\Z}{\ensuremath{\mathbb{Z}}\xspace}
\newclass{\EnumP}{EnumP\xspace}
\newlang{\CMP}{CMP\xspace }
\newlang{\SOBFID}{SOBFID\xspace}
\newlang{\ESOBFID}{EnumSOBFID\xspace}
\newcommand{\ttnode}{{\textit{tt}}\xspace}
\newcommand{\rootnode}{{\textit{root}}\xspace}
\newcommand{\DDS}{\textsc{DDS}\xspace}
\newcommand{\ie}{\emph{i.e.}\@\xspace}
\newcommand{\etc}{\emph{etc.}\@\xspace}
\newcommand{\wrt}{\emph{w.r.t.}\@\xspace}
\definecolor{bleu}{rgb}{0, 0.6, 0.8}
\definecolor{rose}{rgb}{0.8, 0, 0.4}
\definecolor{vert}{rgb}{0, 0.6, 0.4}
\def\rose#1{\textcolor{rose}{#1}}
 \def\sr#1{\rose{\textbf{SR : #1}}}
\tikzstyle{condition}  = [diamond, draw, fill=bleu!50, text width=5em, text badly centered, inner sep=0pt]
\tikzstyle{conditionFunction}  = [diamond, draw, fill=white, text width=5em, text badly centered, inner sep=0pt]
\tikzstyle{solver}     = [draw, ellipse,fill=white, text centered,  minimum height=2em,text width=3.7em]
\tikzstyle{block}      = [rectangle, draw, fill=bleu!80, text width=7em, text centered, rounded corners, minimum height=2em]
\tikzstyle{bigblock}   = [rectangle, draw, fill=bleu!80, text width=8em, text centered, rounded corners, minimum height=2em]
\tikzstyle{line}       = [draw, -latex']
\tikzstyle{problem}    = [circle,double, draw, fill=black!50, text width=3em, text centered, rounded corners, minimum height=1em]
\tikzstyle{emptyblock} = [rectangle, fill=white, text centered, text width=2em, rounded corners, minimum height=2em]
\tikzstyle{emptybigblock} = [rectangle, fill=white, text width=4em, rounded corners, minimum height=2em]
\tikzstyle{vertex}=[circle, draw]
\newtheorem{definition}{Definition}
\newtheorem{lemma}{Lemma}
\newtheorem{theorem}{Theorem}
\newtheorem{proposition}{Proposition}
\newtheorem{corollary}{Corollary}
\newtheorem{remark}{Remark}
\newtheorem{example}{Example}
\newcommand{\N}{\mathbb{N}\xspace}
\DeclareMathOperator{\lcm}{lcm}
\DeclareMathAlphabet{\mathpzc}{OT1}{pzc}{m}{it}
\newcommand{\coef}{a}
\newcommand{\rterm}{b}
\newcommand{\lf}{{\ell}ab}
\newcommand{\lfun}[1]{{\ell}ab({#1})}
\newcommand{\mon}{m}
\newcommand{\nP}{l}
\newcommand{\Pright}{l_\rterm}
\newcommand{\mons}{{\text{\fontfamily{cmvtt}\itshape m}}}
\newcommand{\setst}{\mathcal{X}}
\newcommand{\nodedds}{v}
\newcommand{\dds}{\mathcal{S}}
\newcommand{\pp}{\mathcal{P}}
\newcommand{\ring}{R}
\newcommand{\qtvar}{\mathcal{V}}
\newcommand{\nodemdd}{\alpha}
\newcommand{\nodemddbis}{\beta}
\newcommand{\mddD}{D}
\newcommand{\elemD}{d}
\newcommand{\nPSimp}{\ell}
\newcommand{\aabs}[1]{\ensuremath{\mathring{#1}}\xspace}
\newcommand{\ncycles}{M}
\newcommand{\lcmm}{\lambda}
\newcommand{\lcmK}{\lambda^*}
\newcommand{\psol}{p'}
\newcommand{\subc}{r}
\newcommand{\cmpv}{y}
\newcommand{\sbmdd}{M}
\newcommand{\levelsb}{Z}
\newcommand{\insb}{in}
\newcommand{\esp}{w}
\newcommand{\radn}{n}
\newcommand{\radp}{p}
\newcommand{\possn}{s}
\newcommand{\possp}{o}
\begin{document}

\title{An Algorithmic Pipeline for Solving Equations over Discrete Dynamical Systems Modelling Hypothesis on Real Phenomena}

\author[unimib]{Alberto Dennunzio}
\ead{alberto.dennunzio@unimib.it}

\author[UCA]{Enrico Formenti}
\ead{enrico.formenti@unice.fr}

\author[unibo]{Luciano Margara}
\ead{luciano.margara@unibo.it}

\author[UCA]{Sara Riva}
\ead{sara.riva@univ-cotedazur.fr}

\address[unimib]{Dipartimento di Informatica, Sistemistica e Comunicazione,
  Università  degli Studi di Milano-Bicocca,
  Viale Sarca 336/14, 20126 Milano, Italy}
  
\address[UCA]{Universit\'e C\^ote d'Azur, CNRS, I3S, France}

\address[unibo]{Department of Computer Science and Engineering, University of Bologna, Cesena Campus, Via Sacchi 3, Cesena, Italy}


\begin{abstract}
This paper provides an algorithmic pipeline for studying the intrinsic structure of 
a finite discrete dynamical system (DDS) modelling an evolving phenomenon.
Here, by intrinsic structure we mean, regarding the dynamics of the DDS under observation, the feature of resulting from 
the `cooperation' of the dynamics of two or more smaller DDS. The intrinsic structure is described by an equation over DDS which represents a hypothesis over the phenomenon under observation. The pipeline allows solving such an equation, \ie, validating the hypothesis over the phenomenon, as far the asymptotic behavior and the number of states of the DDS under observation are concerned. 
The results are about the soundness and completeness
of the pipeline and they are obtained by exploiting the algebraic setting for DDS introduced in~\cite{dorigatti2018}. 
\end{abstract}
\begin{keyword}
discrete modelling, finite discrete dynamical systems, hypothesis on phenomena  
\end{keyword}
\maketitle 

\section{Introduction}
Finite Discrete Dynamical Systems (DDS for short) are useful tools that have been used since at least the seventies of the last century for modelling many evolving phenomena, especially  those rising from complex systems, that can go through a finite number of states. They  can be found in many disciplines ranging for instance from biology to chemistry, stepping through computer science,  physics, economics, sociology, \etc. Boolean automata networks, genetic regulations networks, and metabolic networks are just a few examples of DDS used in bioinformatics~\cite{bower2004computational, Sene12, Siebert2009, AracenaCS21,DemongeotMNRS22}. Cellular Automata with a finite number of cells and a finite alphabet are other examples of DDS exploited in a wide range of scientific domains for modelling complex phenomena~\cite{Adamatzky0MTTW20,AlonsoSanz12,AlvarezER08,NandiKC94,CCNC97}.    

When studying an evolving phenomenon modelled by a DDS, an important task is  describing its dynamical behavior from experimental data. 
Namely, one aims at providing a finer structure of the observed dynamics, or, in other terms, answering the following question which the central issue of this paper:
\begin{question}
\label{Q1}
Is the dynamics that we observe (from experimental
data for instance) the action of a single basic system or does it come
from the cooperation between two or more simpler systems? 
\end{question}
%
%

In this sense, a DSS can be viewed as a complex object with a certain intrinsic structure, \ie, the feature of resulting from cooperating basic components. It is crucial, of course, to precise the meaning of the word `cooperation' in Question~\ref{Q1}. In~\cite{dorigatti2018}, two forms of cooperation have been devised. The additive form, denoted by $+$, in which
two DDS with independent dynamics  provide together the observed system and the product one, denoted by $\cdot$, 
in which the observed system results from the joint parallel
action of two DDS. The formalisation of these concepts leads to endow the set of all DDS with the algebraic structure of commutative
semiring~\cite{dorigatti2018}. In this way, to face Question~\ref{Q1} it is quite natural consider multivariate monomials of the type $a\cdot x^w$ to
represent a hypothesis about a finer structure of a given DDS. Here, considering $a\cdot x^w$ means that in the first place the observed DDS is supposed to result from the joint parallel action of a known DDS, \ie, the coefficient $a$, and
$w$ copies of some yet unknown DDS $x$. Following this new point of view, a polynomial $P(x_1, \ldots, x_\qtvar)$ is hence a more complex and realistic  hypothesis on the observed DDS $b$ and then Question~\ref{Q1}
can rephrased as:

\begin{question}\label{Q:P(X)=C}
Does the dynamics that we observe result from several independent smaller systems, each of them having dynamics determined by the joint parallel action of a known part and an unknown part to be computed? In other words, does the equation $P(x_1, \ldots, x_\qtvar)=b$ have a solution? If any, what are its solutions?
\end{question}

We stress that answering this question is always possible since the constant right-hand side bounds the space of admissible solutions. 
However, it might be highly non trivial, as illustrated in~\cite{dorigatti2018} for classes of more general polynomial equations $P(x_1, \ldots , x_\qtvar) = Q(x_1, \ldots , x_\qtvar)$, although DDS have very simple dynamics. Indeed, since the set of states is finite, all their points are ultimately periodic and then any system ultimately evolves towards one of its cycle sets, each of them consisting of periodic points and called attractor, that together describe the so-called \emph{asymptotic behavior} of the system.

Since we aim at proposing a 
solid and effective tool to be used in applications that answers Question~\ref{Q:P(X)=C}, in this paper we start to focus our attention on
equations involving a multivariate polynomial without products of distinct variables.  
From now on, when no confusion is possible, we will always refer to that type of equations.
\smallskip

Our idea for solving the equation $P(x_1, \ldots, x_\qtvar)=b$ is based on three abstractions of such an equation:
the $c$-abstraction, the $a$-abstraction and
the $t$-abstraction. By means of the abstractions, potential solutions of the considered equation are filtered out. Indeed, each abstraction provides the DDSs with a specific property induced by the equation. Namely, the $c$-abstraction allows computing all DDS that satisfy the condition on the cardinality of the state set induced by the original equation.
The $a$-abstraction provides all DDS 
having a set of attractors that satisfies the equation obtained by the original one when constants and variables are restricted to their asymptotic behavior. 
Finally, the $t$-abstraction is similar to the $a$-abstraction but it  focuses on the
transient behaviour, \ie, on the states that do not belong to any attractor. We stress that the set of the solutions of the equation $P(x_1, \ldots, x_\qtvar)=b$ just turns out be the intersection of the sets of DDSs selected by these three abstractions. 
For this
reason, the enumeration of all solutions of each abstraction is needed to reach the goal.

As a first important step, in this paper we consider the $c$- and $a$-abstractions and, as results, we provide two methods solving the corresponding  abstraction equations, leaving the  $t$-abstraction for a future work.  
Let us explain their relevance.   First of all, as we will see, solving the only $a$-abstraction equation requires a non trivial pipeline (\ie, a sequence of processes with the output of one process being the input of the next one), including the computation of the $w$-th root of the asymptotic behavior of a DDS, that is a necessary intermediate step. 
Furthermore, the results allow understanding the structure of the asymptotic behavior of a phenomenon and it is well-known that this is very important in applicative scenarios. 

Both the procedures make use of \emph{Multi-valued Decision Diagrams (MDD)}(\cite{bergman2016decision, darwiche2002knowledge,bergman2014mdd}) suitably defined according to our settings. Actually, they have been applied in several domains for representing formal objects in a compressed form. Their advantage is that they perform many operations without decompressing information.
In this work, MDD are exploited to provide in an efficient way the needed solutions of all the further equations and systems derived from the abstractions, especially the $a$-abstraction. Moreover, they allow efficiently performing  some important operations that are required by our procedures as for instance the product of solutions and the intersection of sets of them. 

The paper is structured as follows. Next section introduces the background on DDS and the their semiring. The $c$- and the $a$-abstraction equations are dealt with in Section~\ref{sec:c-abstraction}
and Section~\ref{sec:a-abstraction}, respectively.
Section~\ref{sec:intersection} illustrates by a full worked out example how the solutions of the two abstraction equation can be combined.
In the last section we draw our conclusions and some perspectives.

\section{Background and basic facts}\label{back}
A \emph{Discrete Dynamical System (DDS)} is a pair $\structure{\setst,f}$ where $\setst$ is a finite \emph{set of states} and 
$f: \setst \to \setst$ is a function called \emph{next state map}. Any DDS $\structure{\setst,f}$ can be identified with the directed graph $G=\structure{V,E}$, called \emph{dynamics graph}, where $V=\setst$ and $E=\set{(\nodedds,f(\nodedds))\, |\, \nodedds\in V}$ is the graph of $f$.

\smallskip

Let $\dds$ be a DDS $\structure{\setst,f}$ and let $G$ be its dynamics graph.  If $\mathcal{Y}$ is any subset of $\setst$ such that $f(\mathcal{Y})\subseteq \mathcal{Y}$, then 
the DDS $\structure{\mathcal{Y},f|_\mathcal{Y}}$ is said to be the \emph{dynamical subsystem} of $\structure{\setst,f}$ induced by $\mathcal{Y}$ (here, $f|_\mathcal{Y}$ means the restriction of $f$ to $\mathcal{Y}$). Clearly, the dynamics graph of $\structure{\mathcal{Y},f|_\mathcal{Y}}$ is nothing but the subgraph of $G$ induced by $\mathcal{Y}$. 
A state $\nodedds\in \setst$ is a \emph{periodic point} of $\dds$ if there exists an integer $ p>0$ such that $f^{p}(\nodedds)=\nodedds$. The smallest $p$ with the previous property is called \emph{period} of $\nodedds$. If $p=1$, the state $\nodedds$ is simply a \emph{fixed point}. A \emph{cycle} (of length $p$) of $\dds$ is any set $\mathcal{C}=\{\nodedds,f(\nodedds), ..., f^{p-1}(\nodedds)\}$ where $\nodedds\in \setst$ is a periodic point of period $p$. Clearly, the set $\pp$ of all the periodic points of $\dds$ can be viewed as union of disjoint cycles. Moreover, both $\structure{\mathcal{C},f|_\mathcal{C}}$ and $\structure{\pp,f|_\pp}$ are dynamical subsystems of $\dds$ and their dynamics graphs just consist of one among, resp., all,  the strongly connected components of $G$.  
In the sequel, we will identify $\mathcal{C}$ and $\pp$ with the DDS $\structure{\mathcal{C},f|_\mathcal{C}}$ and $\structure{\pp,f|_\pp}$ (and then with their dynamics graphs too), respectively.


\smallskip
Two DDS are 
\emph{isomorphic} 
if their dynamics graph are so in the usual sense of graph theory. 
When this happens, the systems are indistinguishable from the dynamical point of view. In particular, periodic points and cycles of a system are in one-to-one correspondence with periodic points and cycles of the other system. Therefore, the dynamical subsystems induced by them in 
the respective DDS are isomorphic too.  


\smallskip
Recall that the disjoint union of two sets $\setst_1$ and $\setst_2$ is the set $\setst_1 \sqcup \setst_2 = (\setst_1 \times \{0\}) \cup (\setst_2 \times \{1\})$. 
In~\cite{dorigatti2018}, an abstract algebraic setting for DDS was introduced. In particular, the following operations over the set of DDS were defined where  the notion of disjoint union is extended to functions. 

\begin{definition}[Sum and product of DDS]\label{ddssum}
	The sum $\structure{\setst_1,f_1}+\structure{\setst_2,f_2}$ and the product $\structure{\setst_1,f_1}\cdot\structure{\setst_2,f_2}$ of any two DDS $\structure{\setst_1,f_1}$ and $\structure{\setst_2,f_2}$ are the DDS $\structure{\setst_1 \sqcup \setst_2, f_1 \sqcup f_2}$ and $\structure{\setst_1 \times \setst_2, f_1 \times f_2}$, respectively, where the function $f_1 \sqcup f_2: \setst_1 \sqcup \setst_2 \to \setst_1 \sqcup \setst_2$ is defined as: \\ 
	\begin{equation*}
		\forall (\nodedds,i) \in \setst_1 \sqcup \setst_2     \,\,\,\,  (f_1 \sqcup f_2)(\nodedds,i)=
		\begin{cases}
		(f_1(\nodedds),i)&\text{if}\;\nodedds\in \setst_1 \land i=0\\
		(f_2(\nodedds),i)&\text{if}\;\nodedds\in \setst_2  \land i=1
		\end{cases}\enspace,
	\end{equation*}
	%
while $f_1 \times f_2: \setst_1 \times \setst_2 \to \setst_1 \times \setst_2$ is the standard product of functions defined as  
$\forall (v_1,v_2) \in \setst_1 \times \setst_2, (f_1 \times f_2)(v_1,v_2)=(f_1(v_1),f_2(v_2))$ (also called \textit{direct product} in the graph literature).
\end{definition}
It is not difficult to see that the set of all DDS equipped with the sum and product operations turns out to be a semiring $R$ in which both the operations are commutative (up to an isomorphism). In the sequel, the symbols \text{\MVZero} and \text{\MVOne}  stand for their neutral elements. Moreover, for any natural $k>0$ and any DDS $\dds$, the sum $\dds+\ldots+\dds=\sum^k \dds$ and the product $\dds\cdot \ldots\cdot \dds=\prod^k \dds$ of $k$ copies of $\dds$ will be naturally denoted by $k\,\dds$ and $S^k$, respectively. In this way, we can state the following proposition which is nothing but the counterpart in our setting of the well-known standard multinomial theorem.
\begin{proposition}
\label{prop:multinomial}
For any positive naturals $\esp$, $l$, and any DDS $\dds_1, \ldots \dds_\nP$ it holds that 
\[
(\dds_1 + \ldots  + \dds_\nP)^\esp
=\sum\limits_{\substack{k_1+...+k_{l}=\esp\\0\leq k_1,\ldots,k_{l}  \leq \esp }}^{}\binom{\esp}{k_1,...,k_{l}}\prod\limits_{t=1}^{{l}}\dds_t^{k_t}
	\enspace.
\]
\end{proposition}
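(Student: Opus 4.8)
The plan is to reduce the statement to the purely formal content of the classical multinomial theorem: it holds already as an identity between formal polynomials in $\mathbb{N}[X_1,\ldots,X_\nP]$ (polynomials with natural coefficients), and is therefore inherited by every commutative semiring under substitution $X_t\mapsto\dds_t$. Concretely I would only use the properties of $\structure{R,+,\cdot}$ recalled above — associativity and commutativity of both $+$ and $\cdot$, and distributivity of $\cdot$ over $+$; no additive inverse or cancellation is needed, which is exactly why a semiring suffices. Throughout, the integer coefficients $\binom{\esp}{k_1,\ldots,k_\nP}$ are read as iterated sums in $R$ (recall that $c\,\dds$ abbreviates the $c$-fold sum of $\dds$ with itself), and $\dds_t^{0}$ as the multiplicative neutral element.

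The first step is the \emph{generalized distributive law}: for all DDS $\dds_1,\ldots,\dds_\nP$ and every $\esp\ge 1$,
\[
(\dds_1+\ldots+\dds_\nP)^{\esp}\;=\;\sum_{\phi\colon\{1,\ldots,\esp\}\to\{1,\ldots,\nP\}}\ \prod_{i=1}^{\esp}\dds_{\phi(i)}\enspace,
\]
the outer sum ranging over all functions $\phi$. This follows by a routine induction on $\esp$: the case $\esp=1$ is immediate, and in the inductive step one writes $(\dds_1+\ldots+\dds_\nP)^{\esp}=(\dds_1+\ldots+\dds_\nP)^{\esp-1}\cdot(\dds_1+\ldots+\dds_\nP)$, applies distributivity once, and matches each function on $\{1,\ldots,\esp\}$ with the pair formed by its restriction to $\{1,\ldots,\esp-1\}$ and its value at $\esp$.

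The second step groups the summands. By commutativity and associativity of $\cdot$, the term $\prod_{i=1}^{\esp}\dds_{\phi(i)}$ depends on $\phi$ only through the multiplicities $k_t=|\phi^{-1}(t)|$, namely $\prod_{i=1}^{\esp}\dds_{\phi(i)}=\prod_{t=1}^{\nP}\dds_t^{k_t}$; and as $\phi$ varies, the tuple $(k_1,\ldots,k_\nP)$ runs over all tuples of nonnegative integers with $k_1+\ldots+k_\nP=\esp$ (equivalently $0\le k_t\le\esp$ and $\sum_t k_t=\esp$). Partitioning the functions $\phi$ by this tuple, the number of $\phi$ realizing a prescribed $(k_1,\ldots,k_\nP)$ is the multinomial coefficient $\binom{\esp}{k_1,\ldots,k_\nP}$ — choose the $k_1$ preimages of $1$, then the $k_2$ preimages of $2$ among the remaining indices, and so on. Collecting the finitely many terms of the outer sum accordingly yields exactly the stated identity.

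I do not expect a genuine obstacle; the one point needing a line of care is to check that every rearrangement used — permuting factors within a monomial, and reordering and regrouping the terms of the (finite) outer sum — is licensed by the commutative-semiring axioms alone. An alternative, equally short route is a double induction: first prove the binomial identity $(\dds_1+\dds_2)^{\esp}=\sum_{j=0}^{\esp}\binom{\esp}{j}\,\dds_1^{j}\,\dds_2^{\esp-j}$ by induction on $\esp$ (using distributivity and Pascal's rule), then induct on $\nP$ by writing $\dds_1+\ldots+\dds_\nP=(\dds_1+\ldots+\dds_{\nP-1})+\dds_\nP$ and combining coefficients via $\binom{\esp}{j}\binom{j}{k_1,\ldots,k_{\nP-1}}=\binom{\esp}{k_1,\ldots,k_{\nP-1},\,\esp-j}$ whenever $k_1+\ldots+k_{\nP-1}=j$.
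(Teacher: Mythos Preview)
Your proposal is correct. The paper does not actually give a proof of this proposition: it simply introduces it as ``nothing but the counterpart in our setting of the well-known standard multinomial theorem'' and moves on. Your argument via the generalized distributive law followed by grouping monomials according to their exponent tuple is the standard way to justify this in an arbitrary commutative semiring, and it supplies exactly the details the paper leaves implicit; the alternative double induction you sketch would work equally well.
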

\smallskip

Now, consider the semiring $\ring[x_1,x_2,\ldots,x_\qtvar]$ of polynomials over $\ring$ in the variables $x_1,x_2,\ldots,x_\qtvar$, naturally induced by $\ring$. 
Polynomial equations of the following form 
model hypotheses about a certain dynamics deduced from experimental data:
\begin{equation} 
	\coef_1\cdot x_1^{\esp_1} + \coef_2\cdot x_2^{\esp_2} + \ldots + \coef_\mon \cdot x_\mon^{\esp_\mon}=\rterm
	\label{eq:problemSolvedOriginal}
\end{equation}
The known term $\rterm$ is the DDS deduced from experimental data. The coefficients $\coef_z$ (with $z\in\set{1,\ldots,\mon}$) are hypothetical DDS  
that should cooperate to produce the observed dynamics $\rterm$. Finding valid values for the unknowns in~\eqref{eq:problemSolvedOriginal}
provides a finer structure for $\rterm$ which can bring further knowledge about the observed phenomenon. 
We point out that Equation~\eqref{eq:problemSolvedOriginal} might contain duplicated pairs $(x_z, \esp_z)$ since it is the direct formulation of a hypothesis over that phenomenon. Indeed, the process of such a formulation might run into a $x_z^{\esp_z}$ which has been already considered but it has to be differently weighted.  

\section{Abstraction over the cardinality of the set of states ($c$-abstraction)}\label{sec:c-abstraction}
%
%

Given a polynomial equation over \DDS, 
a natural abstraction concerns the number of states of the DDS involved in it.
Performing such an abstraction leads to new equation in which the coefficients of the polynomial, the variables, and the constant term become those natural numbers corresponding to the cardinalities of the state sets of the DDSs involved in the original equation. 

\begin{definition}[c-abstraction]
The \emph{c-abstraction} of a DDS $\dds$ is the cardinality of its set of states. With an abuse of notation, the c-abstraction of $\dds$ is denoted by $|\dds|$.
\end{definition}

The following lemma links c-abstractions with the operations over \DDS. 

\begin{lemma}[\cite{dorigatti2018}]
For any pair of DDS $\dds_1$ and $\dds_2$, it holds that $|\dds_1+\dds_2|=|\dds_1|+|\dds_2|$ and $|\dds_1\cdot\dds_2|=|\dds_1|\cdot|\dds_2|$.
\end{lemma}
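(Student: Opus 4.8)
The plan is to prove the two identities separately, each by exhibiting an explicit bijection between the relevant state sets, since the $c$-abstraction is defined purely set-theoretically as a cardinality.

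For the additive identity $|\dds_1 + \dds_2| = |\dds_1| + |\dds_2|$, recall that by Definition~\ref{ddssum} the sum $\dds_1 + \dds_2$ has state set $\setst_1 \sqcup \setst_2 = (\setst_1 \times \{0\}) \cup (\setst_2 \times \{1\})$. The map $\nodedds \mapsto (\nodedds, 0)$ is a bijection from $\setst_1$ onto $\setst_1 \times \{0\}$ and likewise $\nodedds \mapsto (\nodedds, 1)$ is a bijection from $\setst_2$ onto $\setst_2 \times \{1\}$; moreover these two image sets are disjoint because their second coordinates differ. Hence $|\setst_1 \sqcup \setst_2| = |\setst_1 \times \{0\}| + |\setst_2 \times \{1\}| = |\setst_1| + |\setst_2|$, which is exactly $|\dds_1| + |\dds_2|$. (One should note in passing that the isomorphism class of the sum, hence its cardinality, does not depend on the chosen representatives, so the statement is well posed.)

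For the multiplicative identity $|\dds_1 \cdot \dds_2| = |\dds_1| \cdot |\dds_2|$, the state set of $\dds_1 \cdot \dds_2$ is the Cartesian product $\setst_1 \times \setst_2$, and the fact that $|\setst_1 \times \setst_2| = |\setst_1| \cdot |\setst_2|$ for finite sets is the defining property of multiplication of cardinals; the next-state map $f_1 \times f_2$ plays no role since only the underlying set is counted. This finishes both cases.

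I do not expect any real obstacle here: the result is essentially a restatement of how disjoint union and Cartesian product interact with finite cardinality, and the only thing to be slightly careful about is that $\sqcup$ is the \emph{tagged} disjoint union $(\setst_1 \times \{0\}) \cup (\setst_2 \times \{1\})$ rather than an ordinary union, so that the two pieces are automatically disjoint even when $\setst_1$ and $\setst_2$ overlap as abstract sets. Since both $\setst_1$ and $\setst_2$ are finite by the definition of a DDS, all cardinals involved are natural numbers and the arithmetic is the usual one, so nothing further is needed.
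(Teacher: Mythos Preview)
Your proof is correct. The paper does not actually supply its own proof of this lemma: it simply cites the result from~\cite{dorigatti2018} and moves on. Your argument is the natural elementary verification from Definition~\ref{ddssum}, and there is nothing to add or compare.
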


Using the notion of c-abstraction and the previous lemma,
Equation~\eqref{eq:problemSolvedOriginal} turns into the following 
\emph{$c$-abstraction equation}:

\begin{equation}\label{abstr_states}
|\coef_1|\cdot|x_1|^{\esp_1} + |\coef_2|\cdot|x_2|^{\esp_2} + . . . + |\coef_\mon|\cdot|x_\mon|^{\esp_\mon} = |\rterm|\enspace .
\end{equation}
To reach our overall goal, we need to enumerate all solutions of Equation~\eqref{abstr_states}. In this way, all possible cardinalities of the state sets of the unknown DDSs from the original  Equation~\eqref{eq:problemSolvedOriginal} will be identified. To perform that task, we proceed as follows.  
First of all, we present the enumeration problem from a combinatorial point of view.
Then, we will provide an algorithmic approach allowing the enumeration of the solutions of Equation~\eqref{abstr_states} in an efficient way.
\medskip

Let us consider the case with just one monomial (\ie, $\mon=1$) corresponding to a simpler equation of form $|\coef| \cdot |x|^\esp=|b|$ (\emph{basic case}). It is clear that
\begin{itemize}
        \item if $\esp=0$, then $|x|^\esp$ is the c-abstraction of a DDS consisting of a unique cycle of length one (a fixed point) 
        and $|\coef|=|b|$, while the equation is impossible, otherwise;
        \item if $\esp\ne0$, the equation admits a (unique) solution  
        iff $\sqrt[\esp]{|b|/|\coef|}$ is an integer number.
    \end{itemize}
Given now an equation with $\mon>1$ monomials, 
it is clear that each state of the DDS $b$ must come 
from one of them. 
Thus, we have to consider the all the ways of arranging $|\rterm|$ states among $m$ monomials. Since there can be arrangements in which not all the monomials are involved, by the Stars and Bars method (see~\cite{jongsma2019basic}, for instance), the number of such arrangements is ${|\rterm|+\mon-1 \choose \mon -1}$. Moreover, any arrangement consisting of $b_1, \ldots, b_{\mon}$ states from $\rterm$ in the respective monomials, \ie, any weak composition $b_1, \ldots, b_{\mon}$ of $|\rterm|$ into exactly $m$ parts, gives rise to the following system
	\begin{equation}
	\label{sys:aabs}
	    \begin{cases}
	        |\coef_1|\cdot|x_1|^{\esp_1}&=b_1 \\
	        |\coef_2|\cdot|x_2|^{\esp_2}&=b_2 \\
	        &\vdots\\
	        |\coef_\mon|\cdot|x_\mon|^{\esp_\mon}&=b_\mon
	    \end{cases}\enspace, 
	\end{equation}
where $\sum_{z=1}^{\mon} b_z=|\rterm|$ and each equation falls into the basic case.

Therefore, we need an efficient method that solves all feasible Systems~\eqref{sys:aabs}, \ie, those systems 
admitting a solution. Since any System~\eqref{sys:aabs} consists of equations that are all from the basic case and establishing whether each of them admits a solution is easy, the method can be designed in such a way that the space of possible solutions to be explored is reduced. 
\medskip
    
Due to the combinatorial nature of the problem,  we provide a method based on a  \emph{Multi-valued Decision Diagrams} (MDD) to enumerate the solutions of a c-abstraction equation. 
Recall that an MDD is a rooted acyclic graph able to represent a multivalued function 
having a finite set as domain and the set $\{true, false\}$ as codomain.  Both vertices and edge are labelled.  
In the structure, each level represents a variable, except for the final one with the true terminal node (called \ttnode). The first level contains the root node (called \rootnode). A path from the \rootnode to the \ttnode\ node represents a valid set of variable assignments given by the labels of the edges of that path. We stress that there can be distinct vertices (possibly on the same level) with the same label. For a sake of simplicity, we will often define the specific MDDs under the unconventional assumption that vertices form a multiset. This abuse will allow us to identify vertexes with the values of their labels. 
For more on MDD, we redirect
the interested reader to~\cite{bergman2016decision, darwiche2002knowledge,bergman2014mdd}.

\smallskip

Consider any c-abstraction equation  with $\mon$ monomials and a number $\qtvar$ of distinct variables. We associate such an equation with an MDD \structure{V,E,\lf} in which there are $\qtvar$ levels (one for each variable) and one final level for the $\ttnode$ node. 
The vertices form the multiset   $V=\sum_{i\in\{1,\ldots,\qtvar+1\}}^{} V_{i}$ where $V_{1}=\set{\rootnode}$, $V_{\qtvar+1}=\{\ttnode\}$, and  for each level  $i\in\set{2,\ldots,\qtvar}$ the set $V_{i}\subseteq\set{0,...,|\rterm|}$ of the vertexes of the level $i$ will be defined in the sequel. Indeed, the structure is built level by level. Moreover, for any node $\nodemdd\in V$, let $val(\nodemdd)=\nodemdd$
if $\nodemdd\ne\rootnode$\ and $\nodemdd\ne\ttnode$, while 
$val(\rootnode)=0$ and $val(\ttnode)=|\rterm|$. 

To define the edges outgoing from the vertexes of any level along with the corresponding labels and then the vertexes of the next level too, first of all we associate each level $i$ with the inequality $\sum_{z=1}^{\mon} var(i,z) \cdot|\coef_z|\cdot|x_z|^{\esp_z}\leq|\rterm|$, where  $var(i,z)=1$ if $|x_z|$ is the variable associated with the level $i$, $0$ otherwise, and the set 
$\mddD_{i}=\{\elemD \in \N \mid \sum_{z=1}^{\mon} var(i,z)\cdot|\coef_z|\cdot |x_z|^{\esp_z}\leq|\rterm|\text{ with }|x_z|=\elemD\}\cup\set{0}$ 
of the labels of the edges outgoing from the vertexes of the level $i$.  These labels represent the possible values for the variable corresponding to the level $i$.

Now, for each level $i \in \set{1,\ldots , \qtvar-1}$, for any vertex $\nodemdd \in V_i$ and any $\beta\in\set{0,...,|\rterm|}$ it holds that $\nodemddbis \in V_{i+1}$ and  $(\nodemdd,\nodemddbis)\in E$ iff  there exists $d\in D_i$ such that
\begin{description}
    \item $\beta=val(\nodemdd)+\sum_{z=1}^{\mon} var(i,z)\cdot|\coef_z|\cdot \elemD^{\esp_z} \leq val(\ttnode)$. 
\end{description}
Similarly, regarding the level $\qtvar$, for any vertex $\nodemdd \in V_\qtvar$, it holds that $(\nodemdd,\ttnode)\in E$ iff  there exists $d\in D_i$ such that
\begin{description}
    \item $val(\ttnode)=val(\nodemdd)+\sum_{z=1}^{\mon} var(\qtvar,z)\cdot|\coef_z|\cdot \elemD^{\esp_z}$. 
\end{description}
In both cases the edge  $(\nodemdd,\nodemddbis)$ is associated with the label  $\lfun{(\nodemdd,\nodemddbis)}=\elemD$. In this way, the labelling function  $\lf\colon E\to \bigcup_{i\in\{1,\ldots,\qtvar\}}^{} \mddD_{i}$ has been defined too. 

The value $val(\nodemdd)$ associated with any node $\nodemdd$ represents the amount of states obtained from a partial set of variable assignments, \ie, a set of assignments involving the variables until the level the node $\nodemdd$ belongs to,  each of them  corresponding to a path from $\rootnode$ to $\nodemdd$.  The c-abstraction equation admits no solution if there is no path  from $\rootnode$ to $\ttnode$ on the associated MDD. 

Finally, the MDD is reduced by performing a pReduction \ie a procedure that merges equivalent nodes (on the same layer)
and delete all nodes (and the corresponding edges) which are not on a path from \rootnode\ to \ttnode~\cite{perez2015efficient}. 
\begin{example}\label{ex:states}
    Consider the following equation:
    \[
    2\cdot|x_3|+5\cdot|x_1|^2 + 4\cdot|x_2|+4\cdot|x_1|^4+4\cdot|x_3|^2 = 593 \enspace.
    \]
    Hence, there are ${593+5-1 \choose 5 -1}=5.239776465 \times 10^9$ way of arranging $|\rterm|=593$ states among $m=5$ monomials. 
    However, not all of them 
    give rise to 
    solutions 
    of that equation. 
    According to the definition, the resulting reduced MDD is 
    illustrated in Figure~\ref{fig:MDDnodes}. The first level of the structure represents the possible values 
    for the variable $|x_1|$. In the second one, the red edges along with the corresponding label represent the possible values 
    for $|x_2|$, in the case 
    $|x_1|=1$, while the blue ones are the possible assignments for $|x_2|$, in the case 
    $|x_1|=3$. The last edge layer 
    represents the possible values for $|x_3|$.
    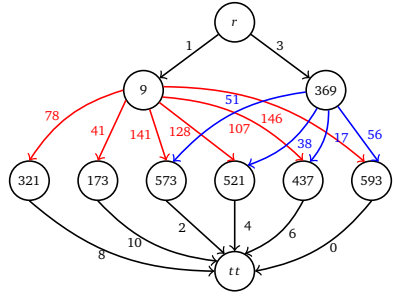
\begin{figure}[t]
  \begin{center}
	\begin{tikzpicture}[scale=.3,-latex,auto, semithick , state/.style ={circle,draw,minimum width=8mm},font=\footnotesize]
	\node[state,scale=.65] (r) at (0,0) {$r$};
	\node[state,scale=.65] (1=9) at (-4,-3) {$9$};
	\node[state,scale=.65] (1=369) at (4,-3) {$369$};
	\node[state,scale=.65] (2=321) at (-9,-7) {$321$};
	\node[state,scale=.65] (2=173) at (-6,-7) {$173$};
	\node[state,scale=.65] (2=573) at (-3,-7) {$573$};
	\node[state,scale=.65] (2=521) at (0,-7) {$521$};
	\node[state,scale=.65] (2=437) at (3,-7) {$437$};
	\node[state,scale=.65] (2=593) at (6,-7) {$593$};
    \node[state,scale=.65] (tt) at (0,-11) {$tt$};
    
    \draw [->]  (r) to node[above,scale=.65] {$1$} (1=9);
    \draw [->]  (r) to node[above,scale=.65] {$3$} (1=369);
    \path[->,red] (1=9.180)  edge [bend right=20,left] node {\scalebox{0.65}{$78$}} (2=321.90);
    \draw [->,red]  (1=9.210) to node[left,scale=.65] {$41$} (2=173.90);
    \draw [->,red]  (1=9) to node[left,scale=.65] {$141$} (2=573.90);
    \draw [->,red]  (1=9) to node[left,scale=.65] {$128$} (2=521.90);
    \path[->,red] (1=9.340)  edge [bend left=20,below] node {\scalebox{0.65}{$107$}} (2=437.90);
    \path[->,red] (1=9.10)  edge [bend left=20,below] node {\scalebox{0.65}{$146$}} (2=593.110);
    \path[->,blue] (1=369)  edge [right] node {\scalebox{0.65}{$56$}} (2=593.70);
    \path[->,blue] (1=369)  edge [bend left=20,right] node {\scalebox{0.65}{$17$}} (2=437.70);
    \path[->,blue] (1=369)  edge [bend left=20,right] node {\scalebox{0.65}{$38$}} (2=521.50);
    \path[->,blue] (1=369)  edge [bend right=20,above] node {\scalebox{0.65}{$51$}} (2=573.70);
    
    \path[->] (2=321.270)  edge [bend right=20,left] node {\scalebox{0.65}{$8$}} (tt.180);
    \path[->] (2=173.270)  edge [bend right=20,left] node {\scalebox{0.65}{$10$}} (tt.150);
    \path[->] (2=573.270)  edge [left] node {\scalebox{0.65}{$2$}} (tt.120);
    \path[->] (2=521.270)  edge [right] node {\scalebox{0.65}{$4$}} (tt.90);
    \path[->] (2=437.270)  edge [bend left=20,right] node {\scalebox{0.65}{$6$}} (tt.60);
    \path[->] (2=593.270)  edge [bend left=20,right] node {\scalebox{0.65}{$0$}} (tt.0);
    \end{tikzpicture}
  \end{center}
	\caption{The reduced MDD representing all the solutions of $2\cdot|x_3|+5\cdot|x_1|^2 + 4\cdot|x_2|+4\cdot|x_1|^4+4\cdot|x_3|^2 = 593$. There are $\qtvar=3$ variables, which are represented in the structure in the following order: $|x_1|$, $|x_2|$, and $|x_3|$.}
	\label{fig:MDDnodes}
\end{figure}
    \end{example}
    %
    We stress that the MDD allows the exploration of the solution space of the equation in a efficient way. In fact, at each level only a part of the possible values for a variable are considered depending 
    on the feasible assignments of the variables of the previous levels. 
    Moreover, the MDD can 
    gain up to an exponential factor in representation space through the reduction process.
    The worst case space complexity is $O(|\rterm|\qtvar+\delta)$, in terms of number of nodes and edges, where $\delta=\sum_{i=1}^{\qtvar} |D_i|$. The p-reduction reduces the total number of edges to $\delta'\ll\delta$ and the bound of the number of nodes of any level to $\mu\leq|\rterm|$, giving rise to a lower complexity $O(\mu\qtvar+\delta')$. Actually, this bound is never reached in our experiments. As an illustrative case, consider Example~\ref{ex:states}. The MDD could have up to $1188$ nodes and $352835$ edges, but its reduced version has only  $10$ nodes and $18$ edges (see Figure~\ref{fig:MDDnodes}).
    \medskip
    
    Let us recall that the equation over c-abstractions is a polynomial equation over natural numbers. Therefore, simplifications are possible and the whole approach can be applied to the simplified equation.

\section{Abstraction over the asymptotic behaviour ($a$-abstraction)}\label{sec:a-abstraction}
In this section we deal with a further abstraction, namely, the asymptotic one, describing the long-term behaviour of a DDS, \ie, its ultimate periodic behaviour. In particular, we provide a method for solving the version of Equation~\eqref{eq:problemSolvedOriginal} obtained considering the asymptotic behaviour of constants and variables.

\begin{notation}
In the sequel, for any pair of positive integers $n$ and $p$, $C^{n}_{p}$ will stand for the union of any $n$ disjoint cycles of length $p$ of a DDS $\dds$. To stress that we are dealing with sets consisting of union of disjoint cycles, each of them identifying a dynamical subsystem of $\dds$, the operations of disjoint union and   product of two of such sets, or, by identification, the sum and product of the corresponding dynamical (sub)systems, will be denoted by $\oplus$ and $\odot$ instead of $+$ and $\cdot$, respectively. 
According to this notation, it is clear that $C^{n_1}_{p} \oplus C^{n_2}_{p} =C^{n_1+n_2}_{p}$ for any pair of positive naturals $n_1, n_2$ and  $kC^{n}_{p}=C^{kn}_{p}$ for any positive natural $k$. Finally, for any positive natural $i$ and any positive naturals $p_{1}, \ldots, p_{i}$, denote $\lcmm_i=\lcm(p_1, \ldots, p_{i})$.
\end{notation}
\begin{definition}[a-abstraction]
\label{def:abs}
    The \emph{a-abstraction} of a \DDS $\dds$, denoted by $\aabs{\dds}$, is the dynamical subsystem of $\dds$ induced by the set $\pp$ of all its periodic points, or, by identification, the set  $\pp$ itself. 
%
\end{definition}
\begin{remark}
\label{rem:abs}
It immediately follows from the previous definition that the a-abstraction of the sum, resp., the product, of two DDS, is the sum, resp., the product of the a-abstractions of the two DDS. Moreover, the a-abstraction of a DDS $\dds$ can be written as $$\aabs{\dds}=\bigoplus\limits_{i=1}^\nP C^{n_{i}}_{p_{i}}\enspace,$$ for some positive naturals $\nP$, $n_{1},\ldots, n_{\nP}$, and pairwise distinct positive naturals $p_{1}, \ldots, p_{\nP}$, 
where, 
for each $i \in \{1, ...,\nP\}$, $n_i$ is the number of disjoint cycles of length $p_i$ (see Figure~\ref{exass} for an illustrative example).
\end{remark}

\begin{figure}[]
	\centering
	\scalebox{.7}{
	\begin{tikzpicture}[scale=.5,-latex,auto, semithick , font=\footnotesize, state/.style ={circle,draw,minimum width=4mm}]
 	\node[state,fill=rose!40] (a) at (0,0) {};
 	\node[state,fill=black!40] (a_t) at (-2,0) {};
 	\node[state,fill=black!40] (a_t2) at (-4,0) {};
	 \node[state,fill=vert!40] (d) at (8,0) {};
	 \node[state,fill=vert!40] (e) at (10,0) {};
	 \node[state,fill=black!40] (b_t) at (3,3) {};
	 \node[state,fill=black!40] (b_t2) at (2,5) {};
	 \node[state,fill=black!40] (b_t3) at (4,5) {};
	 \node[state,fill=bleu!40] (b) at (3,1) {};
	 \node[state,fill=bleu!40] (c) at (5,1) {};
	 \node[state,fill=bleu!40] (g) at (3,-1) {};
	 \node[state,fill=bleu!40] (h) at (5,-1) {};
	 \node[state,fill=black!40] (h_t) at (5,-3) {};
	 \node[state,fill=vert!40] (f) at (12,0) {};
    \path (a) edge [loop above] node {} (a);
	\draw[->] (b) to[bend left] (c);
	\draw[->] (a_t) to[] (a);
	\draw[->] (a_t2) to[] (a_t);
	\draw[->] (b_t) to[] (b);
	\draw[->] (b_t2) to[] (b_t);
	\draw[->] (b_t3) to[] (b_t);
	 \draw[->] (c) to[bend left] (b);
	\draw[->] (g) to[bend left] (h);
	 \draw[->] (h) to[bend left] (g);
	 \draw[->] (h_t) to[] (h);
	 \draw[->] (d) to[bend left] (e);
	 \draw[->] (e) to[bend left] (f);
	 \draw[->] (f) to[bend left=40] (d);
	\end{tikzpicture} 	}	
	\caption{A DDS with four cycles ($\nP=3$): $(\textcolor{rose}{C^1_1} \oplus \textcolor{bleu}{C^2_2} \oplus \textcolor{vert}{C^1_3})$ in our notation. }
	\label{exass}
\end{figure}

The following proposition provides an explicit expression for the product of several unions of cycles. It will be very useful in the sequel.  
\begin{proposition}\label{prop:prod}
For any natural $\nP>1$ and any positive naturals $n_{1},\ldots, n_{\nP}$, $p_{1}, \ldots, p_{\nP}$, it holds that
\begin{equation*}
\begin{aligned}
        \bigodot\limits_{i=1}^{\nP}C^{n_i}_{p_i} = C^{\frac{1}{\lcmm_{\nP}}{\prod_{i=1}^{\nP} (p_i n_i)}}_{\lcmm_{\nP}}\enspace.
\end{aligned}
 \end{equation*}
\end{proposition}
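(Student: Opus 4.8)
The plan is to prove the formula by induction on $\nP$, reducing everything to the base case $\nP=2$, i.e. the identity $C^{n_1}_{p_1} \odot C^{n_2}_{p_2} = C^{\,p_1 p_2 n_1 n_2 / \lcm(p_1,p_2)}_{\lcm(p_1,p_2)}$. First I would recall that $\odot$ is, by definition, the direct (tensor) product of the dynamics graphs restricted to periodic points, so a single cycle of length $p_1$ times a single cycle of length $p_2$ is the direct product of two directed cycles $\mathbb{Z}/p_1\mathbb{Z}$ and $\mathbb{Z}/p_2\mathbb{Z}$ under the shift maps $i \mapsto i+1$. The orbit of $(i,j)$ under $(x,y)\mapsto(x+1,y+1)$ returns to itself exactly after $\lcm(p_1,p_2)$ steps, so every orbit is a cycle of length exactly $\lcm(p_1,p_2)$; counting, the product has $p_1 p_2$ states partitioned into cycles of that common length, hence $p_1 p_2 / \lcm(p_1,p_2)$ of them. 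This already gives $C^1_{p_1}\odot C^1_{p_2} = C^{\,p_1 p_2/\lcm(p_1,p_2)}_{\lcm(p_1,p_2)}$.

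Next I would handle the multiplicities. Using distributivity of $\odot$ over $\oplus$ (which holds because the algebraic product distributes over the sum in the semiring $R$, restricted to periodic parts), $C^{n_1}_{p_1}\odot C^{n_2}_{p_2}$ is the $\oplus$-sum of $n_1 n_2$ copies of $C^1_{p_1}\odot C^1_{p_2}$, i.e. $n_1 n_2$ copies of $C^{\,p_1 p_2/\lcm(p_1,p_2)}_{\lcm(p_1,p_2)}$, which by the identity $kC^n_p = C^{kn}_p$ from the Notation equals $C^{\,n_1 n_2 p_1 p_2/\lcm(p_1,p_2)}_{\lcm(p_1,p_2)}$. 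This settles $\nP=2$.

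For the inductive step, assume the claim for $\nP-1$, so $\bigodot_{i=1}^{\nP-1} C^{n_i}_{p_i} = C^{N'}_{\lcmm_{\nP-1}}$ with $N' = \frac{1}{\lcmm_{\nP-1}}\prod_{i=1}^{\nP-1}(p_i n_i)$. Then apply the base case to this union-of-cycles times $C^{n_\nP}_{p_\nP}$: the new period is $\lcm(\lcmm_{\nP-1}, p_\nP) = \lcmm_{\nP}$, and the new multiplicity is $\frac{\lcmm_{\nP-1} N' \cdot p_\nP n_\nP}{\lcmm_{\nP}} = \frac{1}{\lcmm_{\nP}}\prod_{i=1}^{\nP}(p_i n_i)$, using associativity of $\lcm$ and that $\lcmm_{\nP-1} N' = \prod_{i=1}^{\nP-1}(p_i n_i)$. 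This is exactly the claimed expression, completing the induction.

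The only genuinely delicate point is the base-case cycle-length computation: one must argue carefully that in the direct product of two cycles \emph{every} connected component is itself a single cycle of length exactly $\lcm(p_1,p_2)$ — in particular that the product graph is a disjoint union of cycles and contains no transient states, which is automatic here since both factors are permutations (bijections on finite sets), so their product is a permutation too, and a permutation's functional graph is a disjoint union of cycles. Once that structural fact is in hand, the length is pinned down by the elementary number-theoretic observation about when $i+k\equiv i \pmod{p_1}$ and $j+k\equiv j\pmod{p_2}$ simultaneously hold, and the rest is bookkeeping with the two identities recorded in the Notation block.
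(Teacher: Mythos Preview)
Your proof is correct and follows essentially the same approach as the paper: induction on $\nP$, with the base case $\nP=2$ established first for $n_1=n_2=1$ via the cyclic-group/orbit argument and then extended to general multiplicities by distributivity, and the inductive step carried out by applying the two-factor formula together with associativity of $\lcm$. The paper's argument is slightly terser (it omits your explicit remark that the product of two permutations is a permutation), but the structure and all key ideas coincide.
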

\begin{proof}
We proceed by finite induction over $\nP$.  
First of all, we prove that the statement is true for $\nP=2$, i.e., 
\begin{equation}
\label{due}
C^{n_1}_{p_1}\odot C^{n_2}_{p_2} = C^{\frac{1}{\lcmm_2}\cdot{p_1 n_1  \cdot p_2 n_2}}_{\lcmm_2}\enspace.
\end{equation}
Let us consider the case $n_1=n_2=1$. Since $C^{1}_{p_1}$ and $C^{1}_{p_2}$ can be viewed as finite cyclic groups of order $p_1$ and $p_2$, respectively, each element of the product of such cyclic groups has order $\lcm(p_1, p_2)$ or, in other words, each element of $C^{1}_{p_1}\odot C^{1}_{p_2}$ belongs to some cycle of length $\lcmm_2$. So, $C^{1}_{p_1}\odot C^{1}_{p_2}$ just consists of  $(p_1\cdot p_2)/\lcmm_2$ cycles, all of length $\lcmm_2$, and therefore $$C^{1}_{p_1}\odot C^{1}_{p_2}=C^{\frac{1}{\lcmm_2}\cdot{p_1   \cdot p_2 }}_{\lcmm_2}\enspace.$$ In the case $n_1\neq 1$ or $n_2\neq 1$, since the product is distributive over the sum, we get
\[
C^{n_1}_{p_1}\odot C^{n_2}_{p_2} = \bigoplus\limits_{i=1}^{n_1} C^{1}_{p_1} \odot\; \bigoplus\limits_{j=1}^{n_2}  C^{1}_{p_2} = \bigoplus\limits_{i=1}^{n_1} \bigoplus\limits_{j=1}^{n_2}  (C^{1}_{p_1} \odot C^{1}_{p_2})=
\bigoplus\limits_{i=1}^{n_1} \bigoplus\limits_{j=1}^{n_2}  C^{\frac{1}{\lcmm_2}\cdot{p_1   \cdot p_2 }}_{\lcmm_2}=
C^{\frac{1}{\lcmm_2}\cdot{p_1 n_1  \cdot p_2 n_2}}_{\lcmm_2}\enspace.
\]
Assume now that the equality holds for any $\nP>1$. Then, we get
\[
\bigodot\limits_{i=1}^{\nP+1}C^{n_i}_{p_i} = C^{\frac{1}{\lcmm_{\nP}}{\prod_{i=1}^{\nP} (p_i n_i)}}_{\lcmm_{\nP}} \odot C^{n_{\nP+1}}_{p_{\nP+1}}= C^{\frac{1}{\lcm(\lcmm_{\nP}, p_{\nP+1} )}\cdot {\prod_{i=1}^{\nP} (p_i n_i)}\cdot (p_{\nP+1}n_{\nP+1}) }_{\lcm(\lcmm_{\nP}, p_{\nP+1} )}
= C^{\frac{1}{\lcmm_{\nP+1}}\cdot {\prod_{i=1}^{\nP+1} (p_i n_i)}}_{\lcmm_{\nP+1}}
\enspace.
\]
Therefore, the equality  also holds for $\nP+1$ and this concludes the proof.
\end{proof}
We now consider the $w$-th power of the union of cycles of a certain lengths and the $w$-th power of the sum of such unions.  Before proceeding, for any DDS $\dds$, we naturally define $\dds^0$ as $C^1_1$, \ie, the neutral element \text{\MVOne} of the product operation. %
\begin{corollary}
\label{lem:s1}
For any natural numbers $\esp\geq 1$, $n\geq 1$, and $p\geq1$, it holds that:
\begin{equation*}
    (C^{n}_{p})^\esp=C^{p^{\esp-1}n^\esp}_{p} .
\end{equation*}
\end{corollary}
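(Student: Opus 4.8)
The plan is to derive this corollary as a direct specialization of Proposition~\ref{prop:prod}. The only mild subtlety is that Proposition~\ref{prop:prod} is stated for $\nP>1$, so I would first dispose of the case $\esp=1$ separately: there $(C^{n}_{p})^1=C^{n}_{p}$, and since $p^{1-1}n^{1}=n$, the claimed identity holds trivially.

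For $\esp\geq 2$, I would apply Proposition~\ref{prop:prod} with $\nP=\esp$ and with the choice $p_i=p$ and $n_i=n$ for every $i\in\{1,\ldots,\esp\}$, so that $\bigodot_{i=1}^{\esp}C^{n_i}_{p_i}=(C^{n}_{p})^{\esp}$. The key computation is then to evaluate the two parameters appearing on the right-hand side of Proposition~\ref{prop:prod}. First, $\lcmm_{\esp}=\lcm(p,\ldots,p)=p$, since all the periods coincide. Second, $\prod_{i=1}^{\esp}(p_i n_i)=\prod_{i=1}^{\esp}(pn)=(pn)^{\esp}$. Substituting these into the formula of Proposition~\ref{prop:prod} yields
\[
(C^{n}_{p})^{\esp}=C^{\frac{(pn)^{\esp}}{p}}_{p}=C^{p^{\esp-1}n^{\esp}}_{p}\enspace,
\]
where the last step just uses $(pn)^{\esp}/p=p^{\esp}n^{\esp}/p=p^{\esp-1}n^{\esp}$ (an integer, as it must be). This completes the argument.

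I do not expect any real obstacle here: the statement is essentially a change of notation away from Proposition~\ref{prop:prod}, and the two things to be careful about are merely (i) treating $\esp=1$ outside the scope of that proposition, and (ii) correctly simplifying $(pn)^{\esp}/p$. One could alternatively give a self-contained induction on $\esp$ mirroring the proof of Proposition~\ref{prop:prod} — base case $\esp=1$ trivial, inductive step $(C^{n}_{p})^{\esp+1}=(C^{n}_{p})^{\esp}\odot C^{n}_{p}=C^{p^{\esp-1}n^{\esp}}_{p}\odot C^{n}_{p}$ and then invoke Equation~\eqref{due} with $p_1=p_2=p$ — but going through Proposition~\ref{prop:prod} directly is shorter and I would prefer that.
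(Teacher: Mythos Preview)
Your proposal is correct and takes essentially the same approach as the paper, which simply states that the result is an immediate consequence of Proposition~\ref{prop:prod}. Your write-up is more explicit about the case $\esp=1$ and the arithmetic simplification, but the underlying argument is identical.
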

\begin{proof}
It is an immediate consequence of Proposition~\ref{prop:prod}. 
\end{proof}
\begin{proposition}
\label{general_newt}
For any positive naturals  $\nP>1$, $w>1$, $n_{1},\ldots, n_{\nP}$, and $p_{1}, \ldots, p_{\nP}$, it holds that
\[
\left(\bigoplus \limits_{i=1}^{\nP} C^{n_i}_{p_i} \right)^\esp
=\bigoplus\limits_{\substack{k_1+...+k_{\nP}=\esp\\0\leq k_1,\ldots,k_{\nP}  \leq \esp }}^{}\binom{\esp}{k_1,\ldots,k_{\nP}}C^{\frac{1}{\lcmK_\nP}\cdot \prod_{i=1}^{\nP} (p_i n_i)^{k_i}}_{\lcmK_\nP}
\]
where, for any tuple $k_1,\ldots, k_i$,  
$\lcmK_i$ is the $\lcm$ of those $p_j$ with $j\in\set{1, \ldots, i}$ and $k_j \not = 0$ (while $\lcmK_i=1$ iff all $k_j=0$). 
\end{proposition}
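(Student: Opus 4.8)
The plan is to obtain the formula by chaining together three results already established: the multinomial theorem (Proposition~\ref{prop:multinomial}), the formula for the $\esp$-th power of a single union of cycles (Corollary~\ref{lem:s1}), and the formula for a product of unions of cycles (Proposition~\ref{prop:prod}).

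First I would apply Proposition~\ref{prop:multinomial} with $\dds_i = C^{n_i}_{p_i}$ to write
\[
\left(\bigoplus_{i=1}^{\nP} C^{n_i}_{p_i}\right)^\esp
= \bigoplus_{\substack{k_1+\ldots+k_\nP=\esp\\ 0\le k_1,\ldots,k_\nP\le \esp}} \binom{\esp}{k_1,\ldots,k_\nP}\,\bigodot_{t=1}^{\nP}\bigl(C^{n_t}_{p_t}\bigr)^{k_t}.
\]
Since $\esp=w>1$, every weak composition in the sum has at least one nonzero part. For each index $t$ with $k_t=0$ the factor $(C^{n_t}_{p_t})^0$ equals $C^1_1 = \text{\MVOne}$, the neutral element of $\odot$ (by the convention fixed just before Corollary~\ref{lem:s1}), so it may be dropped; hence $\bigodot_{t=1}^{\nP}(C^{n_t}_{p_t})^{k_t} = \bigodot_{t\,:\,k_t\neq 0}(C^{n_t}_{p_t})^{k_t}$.

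Next, for each $t$ with $k_t\neq 0$, Corollary~\ref{lem:s1} gives $(C^{n_t}_{p_t})^{k_t}=C^{p_t^{k_t-1}n_t^{k_t}}_{p_t}$. I would then feed the resulting product over $\{t:k_t\neq 0\}$ into Proposition~\ref{prop:prod}: with $q_t=p_t$ and $m_t=p_t^{k_t-1}n_t^{k_t}$ one has $q_t m_t=p_t^{k_t}n_t^{k_t}=(p_tn_t)^{k_t}$, and the common cycle length $\lcm\{p_t:k_t\neq 0\}$ is exactly $\lcmK_\nP$ in the notation of the statement. Thus $\bigodot_{t\,:\,k_t\neq 0}(C^{n_t}_{p_t})^{k_t} = C^{\frac{1}{\lcmK_\nP}\prod_{t:k_t\neq 0}(p_tn_t)^{k_t}}_{\lcmK_\nP}$, and since $(p_in_i)^0=1$ the factors with $k_i=0$ may be reinserted, giving $C^{\frac{1}{\lcmK_\nP}\prod_{i=1}^{\nP}(p_in_i)^{k_i}}_{\lcmK_\nP}$. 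Substituting this into the multinomial expansion yields the claimed identity.

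The argument is just a composition of three earlier results, so there is no real obstacle; the only care needed is the bookkeeping around zero exponents — checking that the $k_t=0$ factors genuinely vanish via $\dds^0=C^1_1$ and reappear harmlessly in $\prod_i(p_in_i)^{k_i}$ — together with the degenerate case in which a single $k_t$ equals $\esp$, where Proposition~\ref{prop:prod} (stated for $\nP>1$) does not literally apply but Corollary~\ref{lem:s1} directly gives $C^{p_t^{\esp-1}n_t^{\esp}}_{p_t}$, which coincides with the general formula since there $\lcmK_\nP=p_t$.
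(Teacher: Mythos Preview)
Your proposal is correct and follows essentially the same approach as the paper: apply Proposition~\ref{prop:multinomial}, then Corollary~\ref{lem:s1} to each nonzero-power factor, then Proposition~\ref{prop:prod} to collapse the product, and finally reinstate the $k_i=0$ factors. The paper's proof is precisely this chain of equalities, though it does not spell out the bookkeeping around zero exponents or the single-$k_t=\esp$ case that you (correctly) flag.
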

\begin{proof}
By Proposition~\ref{prop:multinomial}, Proposition~\ref{prop:prod}, and Corollary~\ref{lem:s1}, we get 
\begin{align*}
\left(\bigoplus \limits_{i=1}^{\nP} C^{n_i}_{p_i} \right)^\esp
&=\bigoplus\limits_{\substack{k_1+\ldots+k_{\nP}=\esp\\0\leq k_1,\ldots,k_{\nP}  \leq \esp }}^{}\binom{\esp}{k_1,\ldots,k_{\nP}}\bigodot\limits_{t=1}^{{\nP}}(C^{\radn_t}_{\radp_t})^{k_t}\\
&=\bigoplus\limits_{\substack{k_1+\ldots+k_{\nP}=\esp\\0\leq k_1,\ldots,k_{\nP}  \leq \esp }}^{}\binom{\esp}{k_1,\ldots ,k_{\nP}}\bigodot\limits_{t=1, k_t\neq 0}^{{\nP}}C^{{\radp_t}^{k_t-1}{\radn_t}^{k_t}}_{\radp_t}\\
&=
\bigoplus\limits_{\substack{k_1+\ldots+k_{\nP}=\esp\\0\leq k_1,\ldots,k_{\nP}  \leq \esp }}^{}\binom{\esp}{k_1,\ldots,k_{\nP}}C^{\frac{1}{\lambda^*_{\nP}}\cdot \left(\prod_{\substack{t=1\\k_t \neq0}}^{\nP} (\radp_t^{k_t} \radn_t^{k_t})\right) }_{\lambda^*_{\nP}}\\
&= \bigoplus\limits_{\substack{k_1+...+k_{\nP}=\esp\\0\leq k_1,\ldots,k_{\nP}  \leq \esp }}^{}\binom{\esp}{k_1,\ldots,k_{\nP}}C^{\frac{1}{\lcmK_\nP}\cdot \prod_{i=1}^{\nP} (p_i n_i)^{k_i}}_{\lcmK_\nP}\enspace.
\end{align*}
\end{proof}
We can now write the \emph{a-abstraction equation} obtained by considering just the asymptotic behavior of all constants and variables in~Equation~\eqref{eq:problemSolvedOriginal}:
\begin{equation}
\label{eq:aAbsEq}
\mathring{\coef}_1\cdot \mathring{x}_1^{\esp_1} + \ldots + \mathring{\coef}_\mon \cdot \mathring{x}_\mon^{\esp_\mon}=\mathring{\rterm}\enspace,
\end{equation}
where, according to Remark~\ref{rem:abs}, for each $z\in\{1, \ldots , \mon\}$ the a-abstraction of the coefficient $\coef_z$ and the a-abstraction of the known term $\rterm$ are
\[
\mathring{\coef}_z= \bigoplus\limits_{i=1}^{\nP_z} C_{p_{zi}}^{n_{zi}} \qquad \text{and} \qquad \mathring{\rterm}= \bigoplus\limits_{j=1}^{\Pright} C_{p_{j}}^{n_{j}}\enspace.
\]
To solve the a-abstraction equation, we first carry out some simplifications. First of all, we consider the actual number $\mons\leq \mon$ of distinct pairs $(\mathring{x}_z, w_z)$ appearing in such an equation. In this way, Equation~\eqref{eq:aAbsEq} can be rewritten as 
\begin{equation}
    \bigoplus\limits_{i=1}^{\nPSimp_1} C_{p_{1i}}^{n_{1i}} \odot X_1 \oplus\ldots \oplus \bigoplus\limits_{i=1}^{\nPSimp_\mons} C_{p_{\mons i}}^{n_{\mons  i}} \odot X_\mons = \bigoplus\limits_{j=1}^{\Pright} C_{p_{j}}^{n_{j}}\enspace,
    \label{eq:simplified}
\end{equation}
where, for each $z \in \{1, \ldots,\mons\}$,  $X_z$ denotes $\mathring{x}_z^{\esp_z}$,    $\nPSimp_z$ is the number of the distinct lengths of the cycles forming the coefficient of $X_z$,  
and, with an abuse of notation, the number $n^{\prime}_{zi}$ of cycles of length $p_{zi}$ inside that coefficient is still denoted by $n_{zi}$ even though it may hold that $n^{\prime}_{zi}\neq n_{zi}$.
\medskip

Equation~\eqref{eq:simplified} is still hard to solve in this form. 
We can further simplify it  by performing
a \textbf{contraction step}
which consists in rewriting it in an equivalent way as union of systems of the following type, one for each 
vector $(n^{11}_{1}, \ldots, n^{11}_{\Pright})$ obtained varying each $n^{11}_{j}\in\set{0,\ldots,n_j}$ with    $j\in\set{1,\ldots,\Pright}$:
\begin{subequations}
  \begin{empheq}[left=\empheqlbrace]{align}
C_{p_{11}}^{n_{11}} \odot 
X_1 
= \bigoplus\limits_{j=1}^{\Pright} C_{p_{j}}^{n^{11}_{j}}\label{eq:contraction-a}\\
C_1^1 \odot \mathring{y} = \bigoplus\limits_{j=1}^{\Pright} C_{p_{j}}^{n_{j}-n^{11}_{j}}\label{eq:contraction-b}
 \end{empheq}
\end{subequations}
where $\mathring{y}=(\bigoplus\limits_{i=2}^{\nPSimp_1} C_{p_{1i}}^{n_{1i}} \odot 
X_1
) \oplus(\bigoplus\limits_{i=1}^{\nPSimp_2} C_{p_{2i}}^{n_{2i}} \odot 
X_2
) \oplus\ldots \oplus (\bigoplus\limits_{i=1}^{\nPSimp_\mons} C_{p_{\mons i}}^{n_{\mons  i}} \odot 
X_{\mons}
)
$.

At this point, let us repeat as long as possible the application of the contraction step over the last equation of each system obtained by the previous contraction step. We stress that such an application essentially consists in 
\begin{enumerate}
\item[$i)$] updating $\mathring{y}$ by removing a term $C_{p_{z i}}^{n_{zi}} \odot 
X_z
$ with $z\in\set{1,\ldots,\mons}$ and $i\in\set{1,\ldots,\nPSimp_z}$, 
\item[$ii)$] considering all possible vectors $(n^{z i}_{1}, \ldots, n^{z i}_{\Pright})$ obtained varying each $n^{zi}_{j}$  
with $j\in\set{1,\ldots,\Pright}$ from $0$ to the remaining number of cycles of length $p_j$ of the right-hand side,
\item[$iii)$]
 introducing, for each of the above mentioned vectors, a new system obtained by adding the following equation  
 \[
 C_{p_{z i}}^{n_{zi}} \odot
 X_z 
 = \bigoplus\limits_{j=1}^{\Pright} C_{p_{j}}^{n^{z i}_{j}}
 \]
to the considered initial system just before the equation involving $\mathring{y}$.
\item[$iv)$] updating the right-hand side of the equation involving $\mathring{y}$ by removing $n^{zi}_j$ cycles from the unions of cycles of length $p_j$.
\end{enumerate}
In this way, we eventually get that Equation \eqref{eq:simplified} can be equivalently rewritten as a union of systems, each of them having the following form 
\begin{equation}
    \label{systelcontr}
    \begin{cases}
C_{p_{11}}^{n_{11}} \odot X_1 & = \bigoplus\limits_{j=1}^{\Pright} C_{p_{j}}^{n^{11}_{j}}\\
C_{p_{12}}^{n_{12}} \odot X_1 & = \bigoplus\limits_{j=1}^{\Pright} C_{p_{j}}^{n^{12}_{j}}\\
&\vdots\\
C_{p_{1\nPSimp_1}}^{n_{1\nPSimp_1}} \odot X_1 & = \bigoplus\limits_{j=1}^{\Pright} C_{p_{j}}^{n^{1\nPSimp_1}_{j}}\\
C_{p_{21}}^{n_{21}} \odot X_2 & = \bigoplus\limits_{j=1}^{\Pright} C_{p_{j}}^{n^{21}_{j}}\\
&\vdots\\
C_{p_{\mons \nPSimp_\mons}}^{n_{\mons \nPSimp_\mons}} \odot X_\mons & = \bigoplus\limits_{j=1}^{\Pright} C_{p_{j}}^{n^{\mons \nPSimp_\mons}_{j}}
\end{cases}\enspace.
\end{equation}
Referring to Equation~\eqref{eq:simplified}, we stress that, for each $j \in \set{1,...,\Pright}$, it holds that   the number of cycles of length $p_j$ involved in know term  is just  $n_j=\sum_{z=1}^{\mons} \sum_{i=1}^{\nPSimp_z} n^{zi}_j$,   where $n^{zi}_j$ represents the number of those that 
the monomial $C_{p_{z i}}^{n_{z i}} \odot X_z$ contributes to form. 

Now, to solve any equation 
$C^{n_{zi}}_{p_{zi}} \odot X_z = \bigoplus\limits_{j=1}^{\Pright} C_{p_{j}}^{n^{zi}_{j}}$ from \eqref{systelcontr}, it is enough to solve the following  $\Pright$ equations  
\begin{equation}
\label{eq:tanteq}
C^{n_{zi}}_{p_{zi}} \odot X_z=C_{p_{1}}^{n^{zi}_{1}},\quad \ldots\quad,\quad C^{n_{zi}}_{p_{zi}} \odot X_z=C_{p_{\Pright}}^{n^{zi}_{\Pright}}
\end{equation}
and compute the Cartesian product among their solutions. Since, for each $j\in\{1,\ldots, \Pright\}$, equation $C^{n_{zi}}_{p_{zi}} \odot X_z=C_{p_{j}}^{n^{zi}_{j}}$ can be rewritten as $$C^{1}_{p_{zi}} \odot X_z = C^{n^{zi}_{j}/{n_{zi}}}_{p_j}\enspace,$$
if $n^{zi}_{j}/{n_{zi}}$ is a natural number, while it has no solution, otherwise, 
solving Equation~\eqref{eq:simplified} reduces to identify all the Systems \eqref{systelcontr} and perform the products and intersections of the solutions of a certain number of simpler equations, called \textbf{basic equations}, with the following form:
\begin{equation} 
\label{eq:simple}
	C^1_p \odot X = C^n_q\enspace,
\end{equation}
where $X$ is some $X_z$,  $p\in\set{p_{11},p_{12}, \ldots,p_{\mons \nPSimp_\mons}}$, $q \in \set{p_1,...,p_{\Pright}}$, and, making reference to the right-hand side, $n$ is smaller or equal to $n_j$, \ie, the number of cycles of length $q=p_j$.
\medskip

To solve Equation~\eqref{eq:simplified},  we need an efficient method that: 1) enumerates the solutions of all Equations \eqref{eq:simple}, \ie, the values of $X_z$, 2) computes the suitable products of these solutions and the intersections of sets of them, 3) retrieves the value of $\mathring{x_z}$ from $X_z$.
The algorithmic pipeline illustrated in Figure~\ref{fig:pipeline} just performs all these tasks. Since a finite but potentially large number of basic equations have to be solved, the pipeline is designed in order that first of all the basic equations admitting solution are identified. In this way, Systems~\eqref{systelcontr} involving basic equations without solutions are  avoided, or, in other words, only feasible contraction steps, \ie, feasible Systems~\eqref{systelcontr} generated by contraction steps are considered. 
%
An MDD-based technique that 
enumerates the solutions of any basic equation is illustrated in Section \ref{mddsec} (task 1), while the identification of all the feasible contraction steps is  presented in 
Section~\ref{cssec} along with the way of solving their corresponding feasible systems starting from the  solutions of basic equations (task 2). Finally, Section~\ref{roots} explains how to compute the DDS $\mathring{x_z}$ starting  from the solutions $X_z$ (task 3). 

\medskip

\usetikzlibrary{positioning,
                shadows,
                shapes.multipart}
\pgfdeclarelayer{foreground}
\pgfdeclarelayer{background}
\pgfsetlayers{background,main,foreground}
\makeatletter
\def\tikz@extra@preaction#1{
  {%
    \pgfsys@beginscope%
      \setbox\tikz@figbox=\box\voidb@x%
      \begingroup\tikzset{#1}\expandafter\endgroup%
      \expandafter\def\expandafter\tikz@preaction@layer
\expandafter{\tikz@preaction@layer}%
      \ifx\tikz@preaction@layer\pgfutil@empty%
      \path[#1];
      \else%
      \begin{pgfonlayer}{\tikz@preaction@layer}%
      \path[#1];%
      \end{pgfonlayer}
      \fi%
      \pgfsyssoftpath@setcurrentpath\tikz@actions@path
      \tikz@restorepathsize%
    \pgfsys@endscope%
  }%
}
\let\tikz@preaction@layer=\pgfutil@empty
\tikzset{preaction layer/.store in=\tikz@preaction@layer}
\makeatother
\newcommand{\midarrow}{\tikz \draw[-triangle 90] (0,0) -- +(.1,0);}
\tikzset{
mpv/.style = {
    rectangle split,
    rectangle split parts=2,
    rectangle split part fill={#1}, 
    draw, rounded corners,
    align=center, text=black,
  preaction layer=background,       
    drop shadow}, 
        }
\tikzstyle{small} = [text width=3.5cm]
\tikzstyle{big} = [text width=5cm]
\tikzstyle{bigbig} = [text width=7cm]
\tikzset{->-/.style={thick,>=stealth,decoration={
  markings,
  mark=at position .5 with {\arrow{>}}},postaction={decorate}}}
\definecolor{alizarin}{rgb}{0.82, 0.1, 0.26}
\definecolor{ballblue}{rgb}{0.13, 0.67, 0.8}
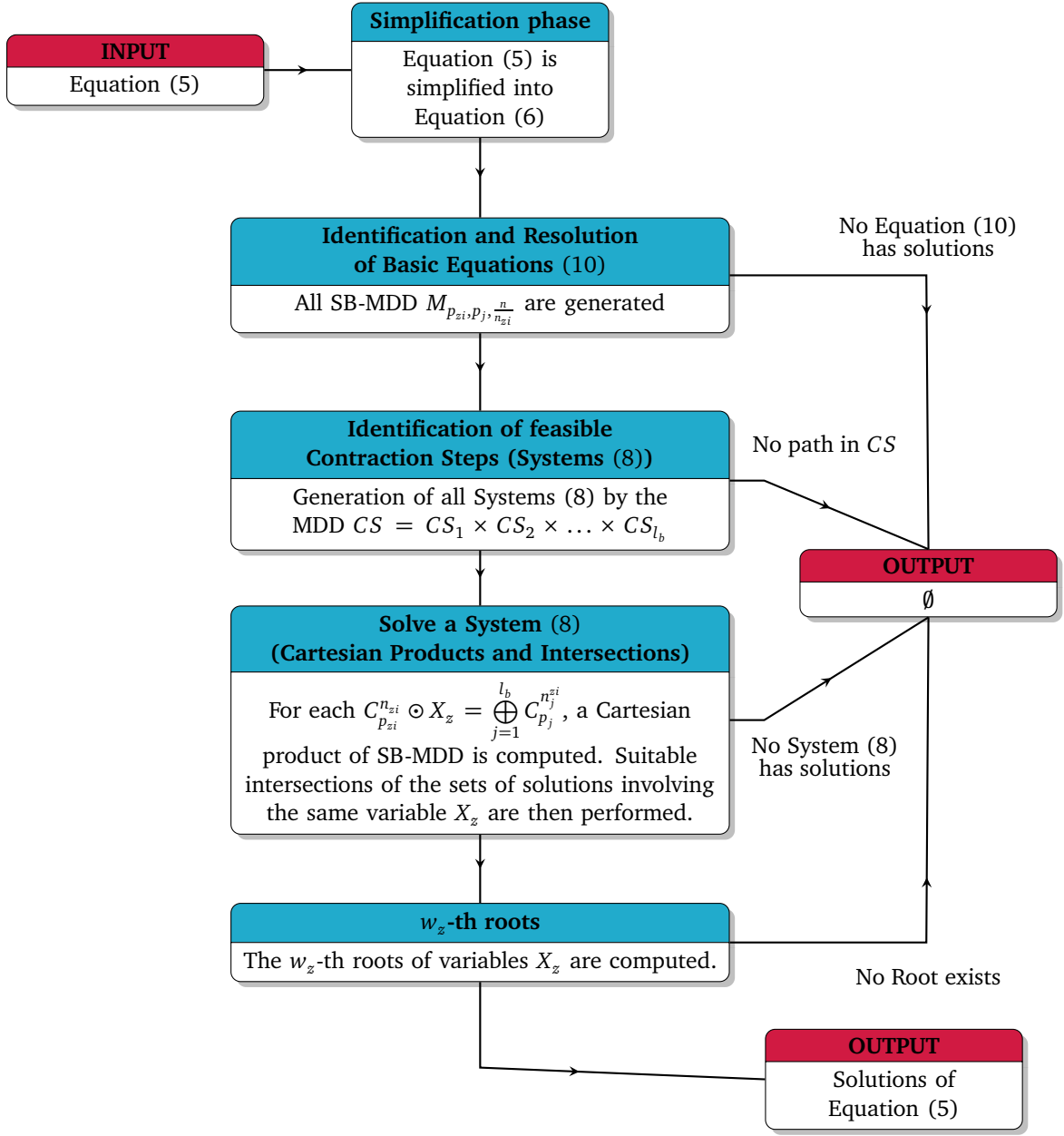
\begin{figure}[]
    \centering
    \begin{tikzpicture}[node distance=2cm]
     \node (input) [mpv={alizarin,white},small] {
        \textbf{INPUT}
        \nodepart{two}  Equation \eqref{eq:aAbsEq}
    }; 
     \node (semp) [right of=input, mpv={ballblue,white}, xshift=3cm,small]{
        \textbf{Simplification phase}
        \nodepart{two}  Equation \eqref{eq:aAbsEq} is \mbox{simplified} into Equation \eqref{eq:simplified}
    }; 
    \draw [->-] (input) --  (semp);
     \node (basic) [mpv={ballblue,white},bigbig, below of=semp,yshift=-1cm]  {
        \textbf{Identification and \mbox{Resolution} of Basic Equations \eqref{eq:simple}}
        \nodepart{two} All SB-MDD $M_{p_{zi},p_{j},\frac{n}{n_{zi}}}$ are generated
    };
    \node[right of=basic,xshift=4.5cm,yshift=0.7cm] {No Equation \eqref{eq:simple}};
    \node[right of=basic,xshift=4.5cm,yshift=0.4cm] {has solutions};
    \node (nobasic)
    [mpv={alizarin,white},small, right of=basic,xshift=4.5cm,yshift=-4.5cm] {
        \textbf{OUTPUT}
        \nodepart{two}  
        $\emptyset$
    };
    \draw [->-] (semp) -- (basic);
    \draw [->-] (basic.east) -| ++(2.85,0) -- (nobasic.north);
    \node (cs) [mpv={ballblue,white},bigbig, below of=basic,yshift=-1cm] {
        \textbf{Identification of feasible \mbox{Contraction} Steps (Systems \eqref{systelcontr})}
        \nodepart{two}  Generation of all Systems \eqref{systelcontr} by the MDD $CS=CS_1 \times CS_2 \times \ldots \times CS_{\Pright}$
    };
    \node[right of=cs,xshift=3cm,yshift=0.5cm] {No path in $CS$};
    \draw [->-] (basic) -- (cs);
    \draw [->-] (cs.east) -| ++(.5,0) -- (nobasic.north);
    \node (solvecs) [mpv={ballblue,white},bigbig, below of=cs,yshift=-1.5cm] {
        \textbf{Solve a System \eqref{systelcontr} (\mbox{Cartesian Products} and Intersections)}
        \nodepart{two}  For each $C^{n_{zi}}_{p_{zi}} \odot X_z = \bigoplus\limits_{j=1}^{\Pright} C_{p_{j}}^{n^{zi}_{j}}$, a Cartesian product of SB-MDD is computed. Suitable intersections of the sets of solutions involving the same variable $X_z$ are then performed.
    };
    \node[right of=solvecs,xshift=3cm,yshift=-0.4cm] {No System \eqref{systelcontr}};
    \node[right of=solvecs,xshift=3cm,yshift=-0.7cm] {has solutions};
    \draw [->-] (cs) -- (solvecs);
     \node (roots) [mpv={ballblue,white},bigbig, below of=solvecs,yshift=-1.25cm] {
        \textbf{$\esp_z$-th roots 
        }
        \nodepart{two} The $\esp_z$-th roots of variables $X_z$ are computed. 
    };
    \node[right of=roots,xshift=4.5cm,yshift=-0.5cm] {No Root exists};
    \draw [->-] (solvecs) -- (roots);
    \draw [->-] (solvecs.east) -| ++(.5,0) -- (nobasic.south);
    \node (output) [mpv={alizarin,white},small, right of=roots,xshift=4cm,yshift=-2cm] {
        \textbf{OUTPUT}
        \nodepart{two}  
        Solutions of Equation \eqref{eq:aAbsEq}
    };
    \draw [->-] (roots.east) -| ++(2.85,0) -- (nobasic.south);
    \draw [->-] (roots.south) -| ++(0,-1.25) -- (output.west);
    \end{tikzpicture}
    \caption{The MDD-based algoritmic pipeline for solving an $a$-abstraction equation.}
    \label{fig:pipeline}
\end{figure}



\subsection{An MDD-based method for solving a basic equation } \label{mddsec}
In this section we are going to solve a basic equation by means of a suitable MDD\footnote{This subsection and the next one are an improved version of the conference paper~\cite{riva2021mdd}.}.
Let us start by considering any basic equation $C^1_p \odot X=C^n_q$. According to Remark~\ref{rem:abs}, each of its solutions is expressed as a sum of unions of disjoint cycles.
\smallskip

By Proposition \ref{prop:prod}, among a certain number of cycles all of length $\psol$ and that form
an addend of  a 
solution, one cycle of length $\psol$ gives rise to $\subc$ cycles of length $q$  
inside $C^n_q$ when it is multiplied by $C^1_p$ iff $\subc$ divides $q$, $\psol=\frac{q}{p}\cdot \subc$, $\gcd(p,\frac{q}{p} \cdot \subc)=\subc$, and $\lcm(p,\frac{q}{p} \cdot \subc)=q$. If such a cycle  $C^1_{\psol}$ satisfies the previous conditions,
then it is called \textbf{feasible} and $\subc$ is said to be a \textbf{feasible divisor} of $q$. Following this idea, let $\mddD_{p,q}=\set{\elemD_1, \ldots, \elemD_e}$ be the set of the feasible divisors of $q$. Therefore, the basic equation admits 
at least one solution iff there exists a set of 
 non negative integers $\cmpv_1, \ldots , \cmpv_e$ such that $\sum_{i=1}^e \elemD_i \cdot \cmpv_i=n$. In that case, the solution corresponding to the tuple $\cmpv_1, \ldots , \cmpv_e$ is the sum of all those $C_{ \psol}^{\elemD_i \cdot \cmpv_i}$ with $\cmpv_i\neq 0$ and where $\psol=\frac{q}{p}\cdot \elemD_i$. 
\smallskip

We now describe a method based on \emph{Symmetry Breaking MDD} (SB-MDD)  enumerating the solutions of the considered basic equation. First of all, let us introduce the MDD $\sbmdd_{p,q,n}$ which is the  labelled digraph \structure{V,E,\lf} 
with vertices forming 
$V=\sum_{i=1}^\levelsb V_i$, 
where $\levelsb=\lfloor\frac{n}{\min \mddD_{p,q}}\rfloor+1$, 
$V_1=\{\rootnode\}$, $V_i$ is a multiset
of $\{1,\ldots,n-1\}$ for $i\in\set{2,\ldots,\levelsb-1}$, and, finally, $V_\levelsb=\{\ttnode\}$. 
For any node $\nodemdd\in V$, let $val(\nodemdd)=\nodemdd$
if $\nodemdd\ne\rootnode$\ and $\nodemdd\ne\ttnode$,
$val(\rootnode)=0$, and $val(\ttnode)=n$.

The structure is defined level by level as follows. For each level  $i\in\set{1,\ldots,\levelsb-2}$, for any $\nodemdd \in V_i$ and any $\nodemddbis \in \{1,\ldots,n-1\}$, it holds that $\nodemddbis\in V_{i+1}$ and $(\nodemdd, \nodemddbis)\in E$ iff $\nodemddbis-val(\nodemdd)\in \mddD_{p,q}$ and  $\nodemddbis \leq val(\ttnode)$. As far as the level $i=Z-1$ is concerned, for any $\nodemdd \in V_i$  it holds that  $(\nodemdd, \ttnode)\in E$ iff $val(\ttnode)-val(\nodemdd)\in \mddD_{p,q}$ and  $\nodemddbis \leq val(\ttnode)$. 
The labelling map $\lf: E\to \mddD_{p,q}$ associates any edge $(\nodemdd, \nodemddbis)\in E$ with the value $\lf(\nodemdd, \nodemddbis)=val(\nodemddbis)-val(\nodemdd)\in \mddD_{p,q}$.  

Once  $\sbmdd_{p,q,n}$ is built and reduced according to the p-Reduction from~\cite{perez2015efficient}, all the solutions of the considered basic equation can be computed.  Indeed, each solution corresponds to the sequence of the edge labels of a path from \rootnode\ to \ttnode consisting of possibly repeated values of $\mddD_{p,q}$ with sum equal to $n$. From such a sequence it is immediate to identify the above mentioned tuple $\cmpv_1, \ldots , \cmpv_e$ and then the corresponding solution. 

We stress that possible permutations of each of the above mentioned sequences can be provided by $\sbmdd_{p,q,n}$. In other words, distinct paths from \rootnode\ to \ttnode can lead to the same solution of the given basic equation. To reduce the size of such a MDD, during its construction and before the p-reduction, a symmetry breaking constraint can be imposed: for each node $\nodemdd\neq \ttnode$ the only allowed outgoing edges are those having a label which is less or equal to that of any of its incoming edges. In this way, any sequence of edge labels read on the paths of the structure turns out to be ordered and the size of the structure becomes smaller. The obtained MDD is called  SB-MDD, \ie, one which satisfies the symmetry breaking constraint. 
\begin{example}
Consider the basic equation $C^1_4 \odot X=C^{12}_{12}$. The set of divisors of $q=12$ (smaller or equal to $n=12$) is $\set{12,6,4,3,2,1}$. Thus, $\mddD_{p,q}=\set{4,2,1}$.
In fact, the following situations occur 
\begin{center}
    $\subc=12 \text{ and } \psol=36 \rightarrow gcd(4,36)\neq 12 \text{ and } lcm(4,36)\neq 12$\\
    $\subc=6 \text{ and } \psol=18 \rightarrow gcd(4,18)\neq 6 \text{ and } lcm(4,18)\neq 12$\\
    $\subc=4 \text{ and } \psol=12 \rightarrow gcd(4,12)=4 \text{ and } lcm(4,12)=12$\\
    $\subc=3 \text{ and } \psol=9 \rightarrow gcd(4,9)\neq 3 \text{ and } lcm(4,9)\neq 12$\\
    $\subc=2 \text{ and } \psol=6 \rightarrow gcd(4,6)=2 \text{ and } lcm(4,6)=12$\\
    $\subc=1 \text{ and } \psol=3 \rightarrow gcd(4,3)=1 \text{ and } lcm(4,3)=12$
\end{center}
Figure~\ref{fig:M4,12,12} shows the result of the reduction over $\sbmdd_{4,12,12}$. Solutions correspond to sequences of edge labels of paths from \rootnode\ to \ttnode.  These sequences form the following set:
\[ \{[4,4,4],[4,4,2,2],[4,4,2,1,1],[4,4,1,1,1,1],[4,2,2,2,2],[4,2,2,2,1,1],[4,2,2,1,1,1,1],[4,2,1,1,1,1,1,1],\]
\[ [4,1,1,1,1,1,1,1,1],[2,2,2,2,2,2],[2,2,2,2,2,1,1],[2,2,2,2,1,1,1,1],[2,2,2,1,1,1,1,1,1],\]
\[[2,2,1,1,1,1,1,1,1,1],[2,1,1,1,1,1,1,1,1,1,1],[1,1,1,1,1,1,1,1,1,1,1,1]\}.\]
\begin{figure}[htb]
  \begin{center}
	\begin{tikzpicture}[scale=.3,-latex,auto, semithick , state/.style ={circle,draw,minimum width=6mm},font=\footnotesize]
	\node[state,scale=.65] (r) at (0,0) {$r$};
	\node[state,scale=.65] (1=1) at (-9,-3) {$1$};
	\node[state,scale=.65] (1=2) at (-6,-3) {$2$};
	\node[state,scale=.65] (1=4) at (0,-3) {$4$};
	\node[state,scale=.65] (2=2) at (-10,-6) {$2$};
	\node[state,scale=.65] (2=3) at (-7,-6) {$3$};
	\node[state,scale=.65] (2=4) at (-5,-6) {$4$};
	\node[state,scale=.65] (2=5) at (-1,-6) {$5$};
	\node[state,scale=.65] (2=6) at (1,-6) {$6$};
	\node[state,scale=.65] (2=8) at (8,-6) {$8$};
	\node[state,scale=.65] (3=3) at (-10,-9) {$3$};
	\node[state,scale=.65] (3=4) at (-7,-9) {$4$};
	\node[state,scale=.65] (3=5) at (-5,-9) {$5$};
	\node[state,scale=.65] (3=6) at (-3,-9) {$6$};
	\node[state,scale=.65] (3=6=b) at (-1,-9) {$6$};
	\node[state,scale=.65] (3=7) at (1,-9) {$7$};
	\node[state,scale=.65] (3=8) at (3,-9) {$8$};
	\node[state,scale=.65] (3=9) at (7,-9) {$9$};
	\node[state,scale=.65] (3=10) at (9,-9) {$10$};
    \node[state,scale=.65] (4=4) at (-10,-12) {$4$};
    \node[state,scale=.65] (4=5) at (-7,-12) {$5$};
    \node[state,scale=.65] (4=6) at (-5,-12) {$6$};
    \node[state,scale=.65] (4=7) at (-3,-12) {$7$};
    \node[state,scale=.65] (4=8) at (-1,-12) {$8$};
    \node[state,scale=.65] (4=8=b) at (1,-12) {$8$};
    \node[state,scale=.65] (4=9) at (3,-12) {$9$};
    \node[state,scale=.65] (4=10) at (5,-12) {$10$};
    \node[state,scale=.65] (4=10=b) at (7,-12) {$10$};
    \node[state,scale=.65] (4=11) at (9,-12) {$11$};
    \node[state,scale=.65] (5=5) at (-10,-15) {$5$};
    \node[state,scale=.65] (5=6) at (-7,-15) {$6$};
    \node[state,scale=.65] (5=7) at (-5,-15) {$7$};
    \node[state,scale=.65] (5=8) at (-3,-15) {$8$};
    \node[state,scale=.65] (5=10) at (-1,-15) {$10$};
    \node[state,scale=.65] (5=9) at (1,-15) {$9$};
    \node[state,scale=.65] (5=10=b) at (3,-15) {$10$};
    \node[state,scale=.65] (5=11) at (6,-15) {$11$};
    \node[state,scale=.65] (6=6) at (-10,-18) {$6$};
    \node[state,scale=.65] (6=7) at (-7,-18) {$7$};
    \node[state,scale=.65] (6=8) at (-5,-18) {$8$};
    \node[state,scale=.65] (6=9) at (-3,-18) {$9$};
    \node[state,scale=.65] (6=10) at (1,-18) {$10$};
    \node[state,scale=.65] (6=11) at (3,-18) {$11$};
    \node[state,scale=.65] (7=7) at (-10,-21) {$7$};
    \node[state,scale=.65] (7=8) at (-7,-21) {$8$};
    \node[state,scale=.65] (7=9) at (-5,-21) {$9$};
    \node[state,scale=.65] (7=10) at (-3,-21) {$10$};
    \node[state,scale=.65] (7=11) at (1,-21) {$11$};
    \node[state,scale=.65] (8=8) at (-10,-24) {$8$};
    \node[state,scale=.65] (8=9) at (-7,-24) {$9$};
    \node[state,scale=.65] (8=10) at (-5,-24) {$10$};
    \node[state,scale=.65] (8=11) at (-3,-24) {$11$};
    \node[state,scale=.65] (9=9) at (-9,-27) {$9$};
    \node[state,scale=.65] (9=10) at (-6,-27) {$10$};
    \node[state,scale=.65] (9=11) at (-4,-27) {$11$};
    \node[state,scale=.65] (10=10) at (-8,-30) {$10$};
    \node[state,scale=.65] (10=11) at (-5,-30) {$11$};
    \node[state,scale=.65] (11=11) at (-6,-33) {$11$};
    \node[state,scale=.65] (tt) at (0,-36) {$tt$};
    
    \draw [->]  (r) to node[above,scale=.65] {$1$} (1=1);
    \draw [->]  (r) to node[below,scale=.65] {$2$} (1=2);
    \draw [->]  (r) to node[right,scale=.65] {$4$} (1=4);
    \draw [->]  (1=1) to node[left,scale=.65] {$1$} (2=2);
    \draw [->]  (1=2) to node[left,scale=.65] {$1$} (2=3);
    \draw [->]  (1=2) to node[right,scale=.65] {$2$} (2=4);
    \draw [->]  (1=4) to node[left,scale=.65] {$1$} (2=5);
    \draw [->]  (1=4) to node[right,scale=.65] {$2$} (2=6);
    \draw [->]  (1=4) to node[above,scale=.65] {$4$} (2=8);
    \draw [->]  (2=2) to node[left,scale=.65] {$1$} (3=3);
    \draw [->]  (2=3) to node[left,scale=.65] {$1$} (3=4);
    \draw [->]  (2=4) to node[left,scale=.65] {$1$} (3=5);
    \draw [->]  (2=4) to node[left,scale=.65] {$2$} (3=6);
    \draw [->]  (2=5) to node[left,scale=.65] {$1$} (3=6=b);
    \draw [->]  (2=6) to node[left,scale=.65] {$1$} (3=7);
    \draw [->]  (2=6) to node[right,scale=.65] {$2$} (3=8);
    \draw [->]  (2=8) to node[left,scale=.65] {$1$} (3=9);
    \draw [->]  (2=8) to node[right,scale=.65] {$2$} (3=10);
    \path[->] (2=8.0)  edge [bend left=50] node {\scalebox{0.65}{$4$}} (tt);
    \draw [->]  (3=3) to node[left,scale=.65] {$1$} (4=4);
    \draw [->]  (3=4) to node[left,scale=.65] {$1$} (4=5);
    \draw [->]  (3=5) to node[left,scale=.65] {$1$} (4=6);
    \draw [->]  (3=6) to node[left,scale=.65] {$1$} (4=7);
    \draw [->]  (3=6) to node[right,scale=.65] {$2$} (4=8);
    \draw [->, bend right]  (3=6=b) to node[below,scale=.65] {$1$} (4=7.65);
    \draw [->]  (3=7) to node[left,scale=.65] {$1$} (4=8=b);
    \draw [->]  (3=8) to node[left,scale=.65] {$1$} (4=9);
    \draw [->]  (3=8) to node[right,scale=.65] {$2$} (4=10);
    \draw [->]  (3=9) to node[left,scale=.65] {$1$} (4=10=b);
    \draw [->]  (3=10) to node[left,scale=.65] {$1$} (4=11);
    \path[->] (3=10.0)  edge [bend left=30] node {\scalebox{0.65}{$2$}} (tt);
    \draw [->]  (4=4) to node[left,scale=.65] {$1$} (5=5);
    \draw [->]  (4=5) to node[left,scale=.65] {$1$} (5=6);
    \draw [->]  (4=6) to node[left,scale=.65] {$1$} (5=7);
    \draw [->]  (4=7) to node[left,scale=.65] {$1$} (5=8);
    \draw [->]  (4=8) to node[left,scale=.65] {$2$} (5=10);
    \draw [->]  (4=8) to node[right,scale=.65] {$1$} (5=9);
    \draw [->]  (4=8=b) to node[right,scale=.65] {$1$} (5=9);
    \draw [->]  (4=9) to node[left,scale=.65] {$1$} (5=10=b);
    \draw [->]  (4=10) to node[right,scale=.65] {$1$} (5=11);
    \draw [->]  (4=10=b) to node[right,scale=.65] {$1$} (5=11);
    \path[->] (4=10.260)  edge [bend left=10] node {\scalebox{0.65}{$2$}} (tt);
    \path[->] (4=11.270)  edge [bend left=20] node {\scalebox{0.65}{$1$}} (tt);
    \draw [->]  (5=5) to node[left,scale=.65] {$1$} (6=6);
    \draw [->]  (5=6) to node[left,scale=.65] {$1$} (6=7);
    \draw [->]  (5=7) to node[left,scale=.65] {$1$} (6=8);
    \draw [->]  (5=8) to node[left,scale=.65] {$1$} (6=9);
    \draw [->]  (5=10) to node[left,scale=.65] {$2$} (tt);
    \draw [->]  (5=9) to node[left,scale=.65] {$1$} (6=10);
    \draw [->]  (5=10) to node[right,scale=.65] {$1$} (6=11);
    \draw [->]  (5=10=b) to node[right,scale=.65] {$1$} (6=11);
    \path[->] (5=11.270)  edge [bend left=15] node {\scalebox{0.65}{$1$}} (tt);
    \draw [->]  (6=6) to node[left,scale=.65] {$1$} (7=7);
    \draw [->]  (6=7) to node[left,scale=.65] {$1$} (7=8);
    \draw [->]  (6=8) to node[left,scale=.65] {$1$} (7=9);
    \draw [->]  (6=9) to node[left,scale=.65] {$1$} (7=10);
    \draw [->]  (6=10) to node[left,scale=.65] {$1$} (7=11);
    \path[->] (6=11.270)  edge [bend left=10,left] node {\scalebox{0.65}{$1$}} (tt);
    \draw [->]  (7=7) to node[left,scale=.65] {$1$} (8=8);
    \draw [->]  (7=8) to node[left,scale=.65] {$1$} (8=9);
    \draw [->]  (7=9) to node[left,scale=.65] {$1$} (8=10);
    \draw [->]  (7=10) to node[left,scale=.65] {$1$} (8=11);
    \path[->] (7=11.270)  edge [] node {\scalebox{0.65}{$1$}} (tt);
    \draw [->]  (8=8) to node[left,scale=.65] {$1$} (9=9);
    \draw [->]  (8=9) to node[left,scale=.65] {$1$} (9=10);
    \draw [->]  (8=10) to node[left,scale=.65] {$1$} (9=11);
    \draw [->]  (8=11) to node[left,scale=.65] {$1$} (tt);
    \draw [->]  (9=9) to node[left,scale=.65] {$1$} (10=10);
    \draw [->]  (9=10) to node[left,scale=.65] {$1$} (10=11);
    \draw [->]  (9=11) to node[left,scale=.65] {$1$} (tt);
    \draw [->]  (10=10) to node[left,scale=.65] {$1$} (11=11);
    \draw [->]  (10=11) to node[left,scale=.65] {$1$} (tt);
    \draw [->]  (11=11) to node[left,scale=.65] {$1$} (tt);

	\end{tikzpicture}
  \end{center}
	\caption{The reduced SB-MDD representing all the solutions of $C^1_4 \odot X=C^{12}_{12}$ .}
	\label{fig:M4,12,12}
\end{figure}
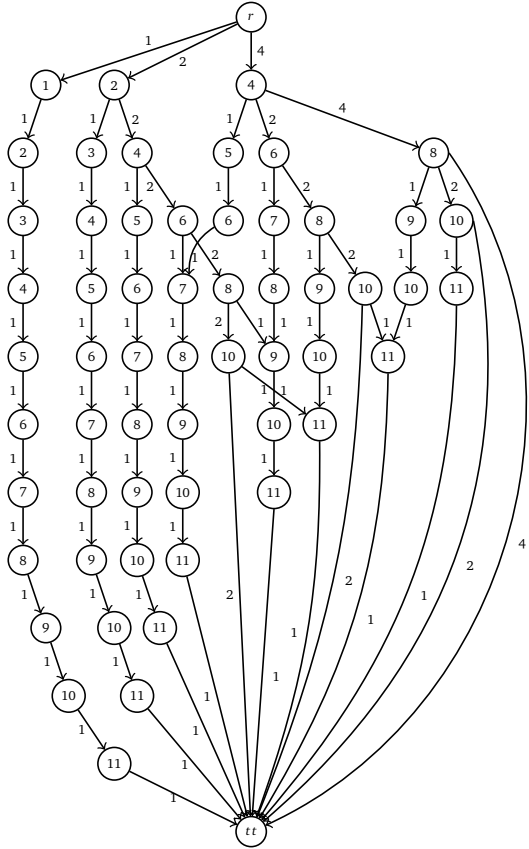
Each element $r$ of a sequence belongs to $\mddD_{p,q}=\set{4,2,1}$ and it corresponds to a cycle of length $\psol=\frac{q}{p} \cdot \subc$ of the solution represented by that sequence. As an example, the sequence $[4,4,2,1,1]$ gives rise to $2$ cycles of length $\psol=\frac{q}{p} \cdot 4=12$, $1$ cycle of length $\psol=\frac{q}{p} \cdot 2=6$, and $2$ cycles of length $\psol=\frac{q}{p} \cdot 1=3$, \ie, the solution $C^2_{12}\oplus C^1_6\oplus C^2_3$.
\[ \{C^3_{12},C^2_{12}\oplus C^2_6,C^2_{12}\oplus C^1_6\oplus C^2_3,C^2_{12}\oplus C^4_3,C^1_{12}\oplus C^4_6,C^1_{12}\oplus C^3_6\oplus C^2_3,C^1_{12}\oplus C^2_6\oplus C^4_3,C^1_{12}\oplus C^1_6\oplus C^6_3,\]
\[ C^1_{12}\oplus C^8_3,C^6_6,C^5_6\oplus C^2_3,C^4_6\oplus C^4_3,C^3_6\oplus C^6_3,C^2_6\oplus C^8_3,C^1_6\oplus C^{10}_3,C^{12}_3\}.\]
\end{example}
The method based on the above described SB-MDD also establishes the instances of equations without solutions via the following criteria:
\begin{itemize}
    \item if $p$ cannot divide $q$;
    \item if $\mddD_{p,q}$ is the empty set;
    \item if, after the reduction process, no valid paths from \rootnode\ to \ttnode\ remain in the SB-MDD structure.
\end{itemize}
The following example just illustrates how the method establishes whether an instance of a basic equation 
has no solutions.   
\begin{example}
Consider the equation $C^1_2 \odot X=C^{5}_{4}$. The set of divisors of $q$ (smaller or equal to $n$) is $\set{4,2,1}$. Thus, $\mddD_{p,q}=\set{2}$.
Indeed, the following situations occur
\begin{center}
    $\subc=4 \text{ and } \psol=8 \rightarrow gcd(2,8)\neq 4 \text{ and } lcm(2,8)\neq 4$\\
    $\subc=2 \text{ and } \psol=4 \rightarrow gcd(2,4)=2 \text{ and } lcm(2,4)=4$\\
    $\subc=1 \text{ and } \psol=2 \rightarrow gcd(2,2)\neq 1 \text{ and } lcm(2,2)\neq 4$
\end{center}
Figure~\ref{fig:M5,4,2} shows $\sbmdd_{5,4,2}$ before the reduction procedure. The red part is deleted 
when the reduction phase is performed. The SB-MDD has no paths from the \rootnode\ to \ttnode\ node, and, hence, the equation has no solutions.
\begin{figure}[t]
  \begin{center}
	\begin{tikzpicture}[scale=.3,-latex,auto, semithick , state/.style ={circle,draw,minimum width=6mm},font=\footnotesize]
	\node[state,scale=.65] (r) at (0,0) {$r$};
	\node[state,scale=.65,red] (1=1) at (0,-3) {$2$};
	\node[state,scale=.65,red] (2=2) at (0,-6) {$2$};
	\node[state,scale=.65] (tt) at (0,-9) {$tt$};
    
    \draw [->,red]  (r) to node[right,scale=.65,red] {$2$} (1=1);
    \draw [->,red]  (1=1) to node[right,scale=.65,red] {$2$} (2=2);

	\end{tikzpicture}
  \end{center}
	\caption{The SB-MDD (before reduction) representing all the solutions of $C^1_2 \odot X=C^{5}_{4}$. The red part is deleted by the pReduction procedure.}
	\label{fig:M5,4,2}
\end{figure}
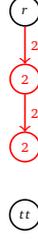
\end{example}
Experiments show how this method can achieve interesting performances in time and memory \cite{riva2021mdd}. 

\subsection{Contraction steps} \label{cssec}
We now present how all feasible Systems~\eqref{systelcontr} can be first generated starting from Equation~\eqref{eq:simplified} and then solved. Since Systems~\eqref{systelcontr} 
may lead to basic equations without solutions and the same basic equation may be reached several times as far as distinct systems are considered, 
first of all the basic equations that can be involved have to be individuated and among them only the \textbf{necessary} ones, \ie, those admitting a solution, have to be solved just once. 

The identification of all the involved basic equations consists in considering all the SB-MDD $\sbmdd_{p_{zi},p_{j},n/n_{zi}}$ defined by varying $z \in \{1,\ldots ,\mons\}$, $i \in \{1,\ldots ,\nPSimp_z\}$, $j \in \{1,\ldots,\Pright\}$, and $n \in \{1, \ldots, n_j\}$. Then, those SB-MDD  corresponding to necessary basic equations are computed, \ie, all the necessary basic equations are solved.  
\medskip

We now describe an MDD able to generate all  feasible Systems~\eqref{systelcontr}. Such an MDD is $CS=CS_1 \times \ldots \times CS_{\Pright}$, \ie, the Cartesian product of $\Pright$ MDD, where each $CS_j$ aims at providing, according to the set of the necessary equations,  all the feasible ways by which the monomials of Equation~\eqref{eq:simplified} can concur to form the $n_j$ cycles of length $p_j$ of the known term $\mathring{\rterm}$. Clearly, by the Stars and Bars method those ways are at most  $\binom{n_j+\nPSimp-1}{\nPSimp-1}$ and, hence, there are at most $\prod^{\Pright}_{j=1} \binom{n_j+\nPSimp-1}{\nPSimp-1}$ feasible Systems~\eqref{systelcontr}. Furthermore, by definition, the whole MDD $CS$ will provide all the feasible ways by which all the cycles of $\mathring{\rterm}$ can be formed.

Each $CS_{j}$ is a labelled digraph \structure{V_{j},E_{j},\lf_{j}} in which there are $\mons\cdot \nPSimp_z$ levels, one for each monomial $C^{n_{zi}}_{p_{zi}} \odot X_z$ from the left-hand side of Equation~\eqref{eq:simplified}, besides the level containing the only terminal node $\ttnode$.  The vertex set is  $V_{j}=(\sum_{z\in\{1,\ldots,\mons\}\;\;\;i\in\{1,\ldots,\nPSimp_z\}}^{} V_{j,zi} )+ V_{j,(\mons+1)1}$ where  
$V_{j,11}=\set{\rootnode}$, $V_{j,(\mons+1)1}=\{\ttnode\}$, and for each pair $(z,i)$ with $z\in\{1,\ldots,\mons\}$ and $i\in\{1,\ldots,\nPSimp_z\}$ the set  $V_{j,zi}\subseteq \{0,\ldots,n_j\}$ of the vertexes of the level $(z,i)$ will be defined in the sequel. Indeed, the graph is built level by level. Moreover, for any node $\nodemdd\in V_j$, let $val(\nodemdd)=\nodemdd$
if $\nodemdd\ne\rootnode$\ and $\nodemdd\ne\ttnode$,
while $val(\rootnode)=0$ and $val(\ttnode)=n_j$.
To define the edges outgoing from the vertexes of any level along with the corresponding label and then the vertexes of the next level too, first of all we associate each level $(z,i)$ with the set $\mddD_{p_{zi},p_j}=\{\elemD \in \N \mid 1\leq \elemD\leq n_j \text{ and }\sbmdd_{p_{zi},p_j,\elemD/n_{zi}} \text{ is defined by a necessary equation}\}\cup \{0\}$ of 
the labels of the edges outgoing from the vertexes of that level. Now, for each level $(z,i)$ with $z\neq \mons$ and $i\neq \nPSimp_z$, for any vertex $\nodemdd\in V_{j,zi}$ and any $\nodemddbis\in \{0,\ldots,n_j\}$, it holds that  
\begin{itemize}
\item $\nodemddbis\in V_{j,z(i+1)}$  and  $(\nodemdd,\nodemddbis)\in E_j$ iff 
$\nodemddbis-val(\nodemdd)\in \mddD_{p_{zi},p_j}$ and $\nodemddbis \leq val(\ttnode)$, whenever $i< \nPSimp_z$;
\item $\nodemddbis\in V_{j,(z+1)1}$ and  $(\nodemdd,\nodemddbis)\in E_j$ iff 
$\nodemddbis-val(\nodemdd)\in \mddD_{p_{zi},p_j}$ and $\nodemddbis \leq val(\ttnode)$, whenever  $i= \nPSimp_z$.
\end{itemize}
Concerning the level $(\mons, \nPSimp_z)$, for any vertex $\nodemdd\in V_{j,\mons\nPSimp_z}$ it holds that $(\nodemdd,\ttnode)\in E_j$ iff $val(\ttnode)-val(\nodemdd)\in \mddD_{p_{\mons \nPSimp_z},p_j}$. Every edge  $(\nodemdd,\nodemddbis)\in E_j$ is associated with the label $\lf_j(\nodemdd,\nodemddbis)=val(\nodemddbis)-val(\nodemdd)\in \mddD_{p_{zi},p_j}$, where $(z,i)$ is such that $\nodemdd\in V_{j,zi}$. In this way, the labelling map $\lf_{j}\colon E_j\to \bigcup_{z\in\{1,\ldots,\mons\}\;\;\;i\in\{1,\ldots,\nPSimp_\mons\}}^{} \mddD_{p_{zi},p_j}$ has been defined too.

We stress that any edge outgoing from vertexes of the level $(z,i)$ represents the cycles of length $p_j$ that the monomial $C^{n_{zi}}_{p_{zi}} \odot X_z$ can contribute to form together with the monomials corresponding to the other edges encountered on a same path from \rootnode to \ttnode. The label of the edge is just the number $n^{zi}_{j}$ of those cycles and the sum of all the labels of the edges in any path from \rootnode to \ttnode is just the number $n_j$ cycles of length $p_j$ to be formed by the monomials $C^{n_{zi}}_{p_{zi}} \odot X_z$ of the left-hand side of Equation~\eqref{eq:simplified}. The value $val(\alpha)$ associated with a node $\alpha$ of a path from \rootnode to \ttnode is the partial result of that sum, \ie, the number of cycles of length $p_j$  formed by the monomials encountered on the subpath from \rootnode to $\alpha$. 

At this point, the MDD $CS$ is built and, according to the definition of cartesian product of MDDs, the involved MDDs are stacked on top of each other in such a way that each $CS_j$ turns out to be on top of  $CS_{j+1}$ and the terminal node of $CS_j$ is collapsed with the root of $CS_{j+1}$. Any path from the root to the terminal node of $CS$ represents a possible way by which the monomials  $C^{n_{zi}}_{p_{zi}} \odot X_z$ of the left-hand side of Equation~\eqref{eq:simplified} can concur to form all the cycles of $\mathring{\rterm}$, or, in other words, it corresponds to a possible solution of Equation~\eqref{eq:simplified}. In particular, since for each pair $(z,i)$ a level $(z,i)$ appears in every $CS_j$,  the set of the $\Pright$ edges in any of the above mentioned paths of $CS$, each of them outgoing from vertexes of the same level $(z,i)$ in one $CS_j$, 
defines a feasible way of solving the equation from System~\eqref{systelcontr}
\[C^{n_{zi}}_{p_{zi}} \odot X_z = \bigoplus\limits_{j=1}^{\Pright} C_{p_{j}}^{n^{zi}_{j}}\enspace,
\]
\ie, a way by which the monomial $C^{n_{zi}}_{p_{zi}} \odot X_z$ gives rise at the same time to $n^{zi}_{1}$ cycles of length $p_{1}$, $n^{zi}_{2}$ cycles of length $p_{2}$, \ldots, and $n^{zi}_{\Pright}$ cycles of length $p_{\Pright}$.  Therefore, all the monomials encountered in a path of $CS$ contribute to form a possibly feasible System~\eqref{systelcontr}. 
\begin{example}\label{cs_ex}
Consider the equation: \[C_4^1\odot X_1 \oplus  C_2^1 \odot X_2=C_2^4\oplus C_4^4 \oplus C_6^7 \oplus C_{12}^7  
.\] 
There are $44$ distinct basic equations and among them $27$ equations are necessary. Indeed, besides the basic equations defined by $p=4$ and $q\in\{2,6\}$, the following ones have no solution: $C_2^1 \odot X_2=C_4^1$, $C_2^1 \odot X_2=C_4^3$, $C_2^1 \odot X_2 =C_{12}^1$, $C_2^1 \odot X_2=C_{12}^3$,
$C_2^1 \odot X_2=C_{12}^5$, and $C_2^1 \odot X_2=C_{12}^7$.\\
To illustrate one $CS_j$, let us consider $j=2$, or, in other words, the MDD providing all the possible ways by which the two monomials of the given equation can concur to form $C_4^4$. Thus, $CS_2$ has $2$ levels, one for each monomial. Any edge outgoing from a level represents the cycles of length $4$, along the number of them, that the monomial corresponding to that level can contribute to form. The first level, corresponding to the monomial $C_4^1\odot X_1$, only contains
the node \rootnode. According to the necessary equations defined $p=4$ and $q=4$, the first monomial is able by itself to form $n^{11}_{2}$ cycles of length $4$ where  $n^{11}_{2}\in\{1,2,3,4\}$.  
Regarding the second monomial, it is able by itself to form either $n^{21}_{2}=2$ or $n^{21}_{2}=4$ cycles of length $4$. 
As Figure \ref{fig:cs2} shows, the MDD $CS_2$ also represents the cases $n^{11}_{2}=0$ and/or $n^{21}_{2}=0$, \ie, where at least one of the two monomials does not contribute to the generation of such cycles at all. Any path from \rootnode to \ttnode provides a feasible way by which the two monomials concur to form $n_2=4$ cycles of length $p_2=4$. 
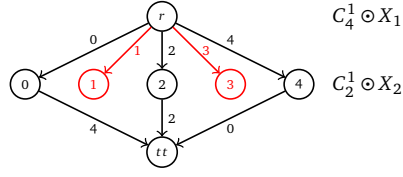
\begin{figure}[t]
  \begin{center}
	\begin{tikzpicture}[scale=.3,-latex,auto, semithick , state/.style ={circle,draw,minimum width=6mm},font=\footnotesize]
	\node[state,scale=.65] (r) at (0,0) {$r$};
	\node[state,scale=.65] (2) at (0,-3) {$2$};
	\node[state,scale=.65,red] (3) at (3,-3) {$3$};
	\node[state,scale=.65] (4) at (6,-3) {$4$};
	\node[state,scale=.65,red] (1) at (-3,-3) {$1$};
	\node[state,scale=.65] (0) at (-6,-3) {$0$};
	\node[state,scale=.65] (tt) at (0,-6) {$tt$};
	\node[state,white] (m1) at (9,0) {\textcolor{black}{$C_4^1\odot X_1$}};
	\node[state,white] (m1) at (9,-3) {\textcolor{black}{$C_2^1 \odot X_2$}};
    \draw [->]  (r) to node[above,scale=.65] {$0$} (0);
    \draw [->,red]  (r) to node[right,scale=.65,red] {$1$} (1);
    \draw [->]  (r) to node[right,scale=.65] {$2$} (2);
    \draw [->,red]  (r) to node[right,scale=.65,red] {$3$} (3);
    \draw [->]  (r) to node[above,scale=.65] {$4$} (4);
    \draw [->]  (0) to node[below,scale=.65] {$4$} (tt);
    \draw [->]  (2) to node[right,scale=.65] {$2$} (tt);
    \draw [->]  (4) to node[below,scale=.65] {$0$} (tt);

	\end{tikzpicture}
  \end{center}
	\caption{The MDD $CS_2$ represents all the possible ways by which, according to the set of necessary equations, the two monomials can concur to form $C^4_4$. The red part is deleted by the pReduction procedure. The value $val(\alpha)$ associated with each node $\alpha$ is also reported.}
	\label{fig:cs2}
\end{figure}
Figure \ref{fig:cs} illustrates the MDD $CS=CS_1\times CS_2\times CS_3\times CS_4$ associated with the given equation and obtained by stacking each $CS_j$ on top of $CS_{j+1}$. Any path from the root to the terminal node of $CS$ represents a possible way by which the monomials of the left-hand side of the given equation 
can concur to form all the cycles of its known term. 
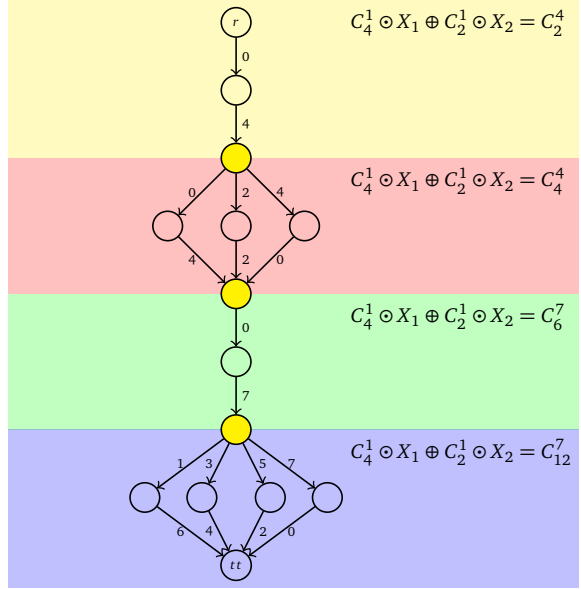
\begin{figure}[t]
  \begin{center}
	\begin{tikzpicture}[scale=.3,-latex,auto, semithick , state/.style ={circle,draw,minimum width=6mm},font=\footnotesize]
	\path [fill=yellow, nearly transparent] (-10,1) rectangle (15,-6);
    \path [fill=red, nearly transparent] (-10,-6) rectangle (15,-12);
    \path [fill=green, nearly transparent] (-10,-12) rectangle (15,-18);
    \path [fill=blue, nearly transparent] (-10,-18) rectangle (15,-25);
	\node[state,scale=.65] (r) at (0,0) {$r$};
	\node[text width=3cm] at (10,0) 
    {$C_4^1\odot X_1 \oplus  C_2^1 \odot X_2=C_2^4$};
    \node[text width=3cm] at (10,-7) 
    {$C_4^1\odot X_1 \oplus  C_2^1 \odot X_2=C_4^4$};
    \node[text width=3cm] at (10,-13) 
    {$C_4^1\odot X_1 \oplus  C_2^1 \odot X_2=C_{6}^7$};
	\node[text width=3cm] at (10,-19) 
    {$C_4^1\odot X_1 \oplus  C_2^1 \odot X_2=C_{12}^7$};
	\node[state,scale=.65] (1) at (0,-3) {};
	\node[state,scale=.65,fill=yellow] (tt1) at (0,-6) {};
    \node[state,scale=.65] (2) at (0,-9) {};
    \node[state,scale=.65] (2u) at (3,-9) {};
    \node[state,scale=.65] (2d) at (-3,-9) {};
    \node[state,scale=.65,fill=yellow] (tt2) at (0,-12) {};
    \node[state,scale=.65] (3) at (0,-15) {};
    \node[state,scale=.65,fill=yellow] (tt3) at (0,-18) {};
    \node[state,scale=.65] (4uu) at (4,-21) {};
    \node[state,scale=.65] (4u) at (1.5,-21) {};
    \node[state,scale=.65] (4d) at (-1.5,-21) {};
    \node[state,scale=.65] (4dd) at (-4,-21) {};
    \node[state,scale=.65] (tt4) at (0,-24) {$tt$};
    \draw [->]  (r) to node[right,scale=.65] {$0$} (1);
    \draw [->]  (1) to node[right,scale=.65] {$4$} (tt1);
    \draw [->]  (tt1) to node[right,scale=.65] {$2$} (2);
    \draw [->]  (tt1) to node[right,scale=.65] {$4$} (2u);
    \draw [->]  (tt1) to node[left,scale=.65] {$0$} (2d);
    \draw [->]  (2u) to node[right,scale=.65] {$0$} (tt2);
    \draw [->]  (2) to node[right,scale=.65] {$2$} (tt2);
    \draw [->]  (2d) to node[left,scale=.65] {$4$} (tt2);
    \draw [->]  (tt2) to node[right,scale=.65] {$0$} (3);
    \draw [->]  (3) to node[right,scale=.65] {$7$} (tt3);
    \draw [->]  (tt3) to node[right,scale=.65] {$7$} (4uu);
    \draw [->]  (tt3) to node[right,scale=.65] {$5$} (4u);
    \draw [->]  (tt3) to node[left,scale=.65] {$3$} (4d);
    \draw [->]  (tt3) to node[left,scale=.65] {$1$} (4dd);
    \draw [->]  (4uu) to node[right,scale=.65] {$0$} (tt4);
    \draw [->]  (4u) to node[right,scale=.65] {$2$} (tt4);
    \draw [->]  (4d) to node[left,scale=.65] {$4$} (tt4);
    \draw [->]  (4dd) to node[left,scale=.65] {$6$} (tt4);

	\end{tikzpicture}
  \end{center}
	\caption{The MDD $CS$ represents all the feasible ways by which, according to the set of necessary equations, the monomials of the equation from Example~\ref{cs_ex} can concur to form its right-hand side. According to the cartesian product of MDD, the yellow nodes are at the same time the \ttnode node of a $CS_j$ and the \rootnode node of  $CS_{j+1}$. The four MDDs are depicted by  different colours (the red MDD corresponds to that from Figure~\ref{fig:cs2}). In each $CS_j$ the first (resp., second) level corresponds to the monomial $C_4^1\odot X_1$ (resp., $C_2^1 \odot X_2$). The values $val(\alpha)$ associated to nodes are omitted for simplicity.}
	\label{fig:cs}
\end{figure}
\end{example}

Now, solving any equation from a   System~\eqref{systelcontr} means computing the cartesian product among the solutions of the $\Pright$  equations in~\eqref{eq:tanteq}. Since each of them can be equivalently rewritten as a basic equation, this can be performed by computing the cartesian product of the SB-MDD, each providing the solutions of the involved basic equation. As usual, such a cartesian product, that we name \textbf{SB-Cartesian MDD}, is obtained by stacking the SB-MDDs on top of each other. In this way, one can get the values of the $X_z$  satisfying  any equation 
of the System~\eqref{systelcontr}
defined by a path of $CS$. We stress that  an SB-Cartesian MDD is not a SB-MDD. 
In particular, although it is satisfied by each of its component SB-MDD, the order constraint among the edge labels of any path from the root to the terminal node of an SB-Cartesian MDD does not hold.

To provide the solutions of a System~\eqref{systelcontr}, for each $X_z$ the intersection among the solutions of all the equations involving the same variable $X_z$ is required. Then, once the values of the $x_z$ will have been computed  starting from the values of $X_z$  
by means of the  algorithm presented in  Section~\ref{roots}, a further intersection of the sets of values of a same $x_z$ arisen from distinct $X_z$ (if any)  will be performed. Indeed, there can be equations in distinct variables that however are (distinct) powers of the a same variable $x_z$. We now deal with the first  mentioned intersection (the second one is standard and it can be performed in such a way that  only one root of the variables $X_z$ that are powers of a same $x_z$ is computed).

According to the current state of the art, 
there exists an algorithm that, starting from two MDD, possibly two SB-MDD, each of them providing the solutions of an equation, builds a new MDD able to compute the intersection between the solutions of the two equations. Essentially, each node in the new structure corresponds to two nodes, one from each MDD, and the procedure recreates an outgoing edge in the structure if it is common to both the MDDs. For more details, we refer the reader to~\cite{perez2015efficient} and~\cite{bergman2014mdd}.

Nevertheless, such an algorithm can not be used if SB-Cartesian MDDs are involved, as  it happens instead in our scenario,  unless each  monomial gives rise to cycles of a unique length, \ie, the solutions of each corresponding equation are computed by a SB-MDD.  
Indeed, the result of the above mentioned algorithm  depends on the order by which the SB-Cartesian MDDs are considered when  the intersection is performed. 
In \cite{riva2021mdd}, a new algorithm performing the intersection  has been proposed in such a way that it properly works independently of that order. Let us recall its underlying idea. 
\smallskip

The algorithm starts to compute the intersection among the solutions of equations provided by all the SB-MDDs, if any. If it is not empty, such an intersection consists of a set of candidate solutions that form the so-called initial guess. Otherwise, the initial guess is the set of the solutions provided by one of the SB-Cartesian MDDs. The current set of candidate solutions which at the beginning is just the initial guess is updated by means of the intersection between itself and the set of the solutions provided by one of the SB-Cartesian MDDs that have not yet been considered. Any intersection essentially consists in visiting the chosen SB-Cartesian MDD $CS$ to establish whether a candidate solution is provided by one among the SB-MDD components $CS_j$ of $CS$.  If this does not happen, it is removed from the set of candidate solutions.

\subsection{Roots of DDS} \label{roots}
We now deal with the problem of retrieving the value of each DDS  $\mathring{x_z}$ once the DDS $X_z$ have been computed. Since each $X_z$ is the $\esp_z$-th power of $\mathring{x_z}$, we are going to introduce the concept of $\esp$-th root in the semiring of DDS and provide an algorithm for computing the $\esp$-th roots of the a-abstractions of DDS. 

First of all, let us formally define the notion of
$w$-root of a general DDS.
\begin{definition}
Let $\esp\geq 2$ be a natural number. The $\esp$-th root of a DDS $\dds$ is a DDS having $\esp$-th power equal to $\dds$.
\end{definition}
Clearly, the a-abstraction of the  $\esp$-th root of a DDS is the $\esp$-th root of the a-abstraction of that system.
The goal is now to compute the  $\esp$-th root of the a-abstraction 
of any DDS. Namely, for any given a-abstraction
$$C^{\possn_1}_{\possp_1} \oplus \ldots \oplus C^{\possn_h}_{\possp_h}\enspace,$$
with $0<\possp_1<\possp_2< \ldots<\possp_h$, 
we want to solve the equation 
\begin{equation}\label{eq:root}
    \mathring{x}^{\esp}=C^{\possn_1}_{\possp_1} \oplus  \ldots \oplus C^{\possn_h}_{\possp_h}\enspace,
\end{equation}
where the unknown is expressed as
\begin{equation*}
\mathring{x}=C^{\radn_1}_{\radp_1} \oplus  \ldots \oplus C^{\radn_l}_{\radp_l},
\end{equation*}
for some naturals $l, \radp_1,\ldots, \radp_l, \radn_1,\ldots, \radn_l$ 
$\radp_1<\ldots< \radp_l$, and $\radn_1,\ldots, \radn_l$
to be determined.

\paragraph{Assumption} From now on, without loss of generality, we will assume $\radp_1<\ldots< \radp_l$, and $\possp_1<\ldots< \possp_h$.
\medskip

Since providing a closed formula for $\mathring{x}$ is essentially unfeasible, we are going to compute the sets  $C^{\radn_i}_{\radp_i}$ one by one starting from $i=1$. 
Such a computation will be iteratively performed  by considering the generation of the sets  $C^{\possn_j}_{\possp_j}$ by carrying out the $w$-th power of the sum of sets $C^{\radn_i}_{\radp_i}$.
\begin{proposition}\label{prop:base}
For any natural $l\geq 2$, if $\mathring{x}=C^{\radn_1}_{\radp_1} \oplus  \ldots \oplus C^{\radn_l}_{\radp_l}$ is a solution of the equation $\mathring{x}^{\esp}=C^{\possn_1}_{\possp_1} \oplus  \ldots \oplus C^{\possn_h}_{\possp_h}$, then  all the following facts hold:
\begin{enumerate}[(i)]
    \item  $l\leq h$ and $\{\radp_1, \ldots, \radp_l\}\subseteq \{\possp_1, \ldots, \possp_h\}$
    \item $\radp_1=\possp_1$ and $\radp_2=\possp_2$;
    \item $\radn_1=\sqrt[\leftroot{-2}\uproot{2}\esp]{\possn_1\over{\possp_1^{\esp-1}}}\in\N$; 
   
    \item 
$
\begin{aligned}
  \radn_2  =
  \begin{cases}
  \frac{\sqrt[\leftroot{-2}\uproot{2}\esp]{\possp_2\possn_2 + \possp_1\possn_1} -  \sqrt[\leftroot{-2}\uproot{2}\esp]{\possp_1\possn_1}}{\possp_2} \in\N, & \text{if } \lcm(\possp_1,\possp_2)=\possp_2,\\
  \sqrt[\leftroot{-2}\uproot{2}\esp]{\possn_2\over{\possp_2^{\esp-1}}}\in\N, & \text{otherwise.}
  \end{cases}
\end{aligned}
$

\end{enumerate} 
\end{proposition}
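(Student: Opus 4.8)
The plan is to exploit the explicit formula for powers of unions of cycles from Proposition~\ref{general_newt} and read off the lowest-length cycles of $\mathring{x}^{\esp}$. First I would set up notation: write $\mathring{x}=\bigoplus_{i=1}^{l} C^{\radn_i}_{\radp_i}$ with $\radp_1<\cdots<\radp_l$, and expand $\mathring{x}^{\esp}$ via Proposition~\ref{general_newt} as a sum over multi-indices $k_1+\cdots+k_l=\esp$ of terms $\binom{\esp}{k_1,\ldots,k_l}\,C^{\,\frac{1}{\lcmK_l}\prod_i(\radp_i\radn_i)^{k_i}}_{\lcmK_l}$, where $\lcmK_l$ is the $\lcm$ of the $\radp_i$ with $k_i\ne 0$. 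The crucial observation is that for any such term, the cycle length $\lcmK_l$ is always a multiple of the smallest $\radp_i$ with $k_i\ne0$, and in particular $\lcmK_l\ge \radp_1$, with equality only for the multi-index $(\esp,0,\ldots,0)$. This immediately forces each $\radp_i$ to occur among the $\possp_j$'s (any cycle length appearing in $\mathring x^\esp$ must be a length of the right-hand side), giving $(i)$; that $l\le h$ then follows since the $\radp_i$ are distinct. For $(ii)$, the smallest length of $\mathring x^\esp$ is exactly $\radp_1$ (realized only by $(\esp,0,\ldots,0)$, contributing $C^{\radp_1^{\esp-1}\radn_1^{\esp}}_{\radp_1}$ by Corollary~\ref{lem:s1}), so $\radp_1=\possp_1$, and then $\possn_1 = \radp_1^{\esp-1}\radn_1^{\esp}$ yields $(iii)$, provided one checks this number is a positive integer $\esp$-th root, which it must be since $\radn_1\in\N$.

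Next I would identify the second-smallest cycle length. The candidates for producing length-$\possp_2$ cycles in $\mathring x^\esp$ are: the ``pure'' term $(0,\esp,0,\ldots,0)$ giving $C^{\radp_2^{\esp-1}\radn_2^{\esp}}_{\radp_2}$ when such a $\radp_2$ exists, and ``mixed'' terms involving only indices $i$ with $\radp_i\in\{\radp_1,\radp_2\}$ whose $\lcm$ equals $\possp_2$. Here a case split on whether $\lcm(\possp_1,\possp_2)=\possp_2$ (i.e. $\possp_1\mid\possp_2$) is exactly what the statement requires. If $\possp_1\nmid\possp_2$, then no mixed term of $\radp_1$ and $\radp_2$ can have $\lcm$ equal to $\possp_2$ (the $\lcm$ would be a proper multiple), nor can $\radp_1$ alone, so the only contribution to $C^{\bullet}_{\possp_2}$ is the pure term, and $\radp_2$ must equal $\possp_2$ (it is the next smallest length after $\possp_1$, since every $\radp_i$ is some $\possp_j$ and there is nothing strictly between $\possp_1$ and $\possp_2$ on the right); this gives $\possn_2=\radp_2^{\esp-1}\radn_2^{\esp}$, i.e. the ``otherwise'' branch of $(iv)$. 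If $\possp_1\mid\possp_2$, I would argue $\radp_2=\possp_2$ still: the multi-indices involving only $\radp_1$ all give length $\radp_1=\possp_1<\possp_2$, so to get length $\possp_2$ one needs some $\radp_i>\radp_1$ with $\radp_i\mid\possp_2$, and the smallest such is $\radp_2$, which — being a length appearing on the right and there being no $\possp_j$ strictly between $\possp_1$ and $\possp_2$ — must be $\possp_2$. Then the length-$\possp_2$ part of $\mathring x^\esp$ collects all multi-indices $(k_1,k_2,0,\ldots,0)$ with $k_2\ge1$, $k_1+k_2=\esp$, each contributing (by Proposition~\ref{prop:prod}/Corollary~\ref{lem:s1}) $\binom{\esp}{k_1,k_2}\,C^{\frac{1}{\possp_2}(\possp_1\radn_1)^{k_1}(\possp_2\radn_2)^{k_2}}_{\possp_2}$. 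Summing the multiplicities over $k_1+k_2=\esp$, $k_2\ge 1$, gives
\[
\possp_2\,\possn_2=\sum_{\substack{k_1+k_2=\esp\\ k_2\ge1}}\binom{\esp}{k_1,k_2}(\possp_1\radn_1)^{k_1}(\possp_2\radn_2)^{k_2}=(\possp_1\radn_1+\possp_2\radn_2)^{\esp}-(\possp_1\radn_1)^{\esp},
\]
using the ordinary binomial theorem. Since $(\possp_1\radn_1)^\esp=\possp_1^\esp\radn_1^\esp=\possp_1\cdot\possp_1^{\esp-1}\radn_1^\esp=\possp_1\possn_1$ by $(iii)$, and likewise $\possp_2\possn_2+\possp_1\possn_1=(\possp_1\radn_1+\possp_2\radn_2)^{\esp}$, taking $\esp$-th roots and solving for $\radn_2$ gives the ``if'' branch of $(iv)$. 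The fact that the displayed quantities are natural numbers follows because $\radn_1,\radn_2\in\N$ by hypothesis.

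The main obstacle I anticipate is the bookkeeping in the mixed case: one must be sure that \emph{no} multi-index other than those supported on $\{1,2\}$ contributes length-$\possp_2$ cycles (ruling out $\radp_i$ with $i\ge 3$, which is automatic since $\radp_i\ge\radp_3>\radp_2=\possp_2$) and that the collapse of the multinomial sum to $(\possp_1\radn_1+\possp_2\radn_2)^\esp-(\possp_1\radn_1)^\esp$ is valid — this hinges on the length formula $\lcmK$ being exactly $\possp_2$ for \emph{every} index $(k_1,k_2)$ with $k_2\ge1$ and $k_1$ possibly $0$, which holds precisely because $\possp_1\mid\possp_2$ so $\lcm(\possp_1,\possp_2)=\possp_2$ and $\lcm(\possp_2)=\possp_2$. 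I would also note that claim $(ii)$'s assertion ``$\radp_2=\possp_2$'' tacitly requires $l\ge 2$ and $h\ge 2$, which is exactly the hypothesis $l\ge 2$ together with $(i)$. A final small point to verify is the edge behavior when some $k_i=0$ in Proposition~\ref{general_newt}: the convention $\lcmK$ omits zero indices, consistent with $\dds^0=C^1_1$, so the $(\esp,0,\ldots,0)$ term is handled correctly by Corollary~\ref{lem:s1}.
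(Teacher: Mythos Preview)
Your proposal is correct and follows essentially the same approach as the paper: expand $\mathring{x}^{\esp}$ via Proposition~\ref{general_newt}, identify the smallest cycle length as $\radp_1$ to get $(i)$--$(iii)$, then case-split on whether $\possp_1\mid\possp_2$ and use the binomial identity $(\possp_1\radn_1+\possp_2\radn_2)^{\esp}-(\possp_1\radn_1)^{\esp}=\possp_2\possn_2$ to recover $\radn_2$ in the divisible case. The paper's proof is organized slightly differently (it establishes $\radp_2=\possp_2$ once, before the case split, rather than within each branch as you do), but the mathematical content is the same; if anything, your treatment of why no $\radp_i$ with $i\ge 3$ can contribute to length-$\possp_2$ cycles is more explicit than the paper's.
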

\begin{proof}\mbox{}\\
\indent$(i)$: According to Proposition~\ref{general_newt}, for each $i\in\{1,\ldots, l\}$, a set $C^{\possn}_{\lcmK_\nP}$ with $\lcmK_\nP=\radp_i$ appears in $\mathring{x}^{\esp}$ when the tuple $(k_1, \ldots, k_l)$ with $k_i=w$ and $k_{i'}=0$ for $i'\neq i$ is involved in the sum.  
Hence,  $\set{\radp_1, \ldots, \radp_l}\subseteq \set{\possp_1, \ldots, \possp_h}$ and $l \leq h$.

$(ii)$: Since $\radp_1$ is the smallest value among all possible $\lcm$ $\lcmK_\nP$ from  Proposition~\ref{general_newt} and   $\possp_1$ is the smallest among the lengths $\possp_1, \ldots, \possp_h$ of the cycles to be generated when the $w$-th  power of $\mathring{x}$  is performed, it must necessarily hold that $\radp_1=\possp_1$ in order that, in particular, cycles of length $\possp_1$ are generated. Moreover, since $\radp_2$ and $\possp_2$ follow in ascending order $\radp_1$ and $\possp_1$, respectively, and $\radp_2$ is also the successor of $\radp_1$ among all the above mentioned $\lcm$, it must also hold that $\radp_2=\possp_2$ in order that cycles of length $\possp_2$ are generated too.

$(iii)$:
Actually, it holds that $(C^{\radn_1}_{\possp_1})^\esp=C^{\possn_1}_{\possp_1}$, which,  by Corollary~\ref{lem:s1}, is equivalent to  $C^{\possp_1^{\esp-1}\radn_1^\esp}_{\possp_1}=C^{\possn_1}_{\possp_1}$. This implies that  $\possp_1^{\esp-1}\radn_1^\esp=\possn_1$ and, hence,  $\radn_1=\sqrt[\leftroot{-2}\uproot{2}\esp]{\possn_1\over{\possp_1^{\esp-1}}}$.

$(iv)$: if 
$\lcm(\possp_1,\possp_2)>\possp_2$, when computing the $w$-th power of $\mathring{x}$, by Lemma~\ref{lem:s1},  $C^{\radn_1}_{\radp_1}$ does not contribute to form $C^{\possn_2}_{\possp_2}$ and, necessarily, it holds that $(C^{\radn_2}_{\possp_2})^\esp=C^{\possp_2^{\esp-1}\radn_2^\esp}_{\possp_2}=C^{\possn_2}_{\possp_2}$. So, we get $\possp_2^{\esp-1}\radn_2^\esp=\possn_2$, the latter implying that  $\radn_2=\sqrt[\leftroot{-2}\uproot{2}\esp]{\possn_2\over{\possp_2^{\esp-1}}}$. If $\lcm(\possp_1,\possp_2)=\possp_2$, both $C^{\radn_1}_{\radp_1}$ and $C^{\radn_2}_{\radp_2}$ contribute to  $C^{\possn_2}_{\possp_2}$. In particular, it holds that $(C^{\radn_1}_{\radp_1} \oplus C^{\radn_2}_{\radp_2})^\esp=C^{\possn_1}_{\possp_1} \oplus C^{\possn_2}_{\possp_2}$. By Proposition~\ref{prop:multinomial}, one finds
\[
    (C^{\radn_1}_{\radp_1})^\esp \oplus\; \bigoplus_{i=1}^{\esp-1} \binom{\esp}{i} (C^{\radn_1}_{\radp_1})^i\odot (C^{\radn_2}_{\radp_2})^{\esp-i}
    \; \oplus
    (C^{\radn_2}_{\radp_2})^\esp
    =C^{\possn_1}_{\possp_1} \oplus C^{\possn_2}_{\possp_2}\enspace.
\]
 Since $(C^{\radn_1}_{\radp_1})^\esp=C^{\possn_1}_{\possp_1}$ and by Corollary~\ref{lem:s1} and Proposition~\ref{prop:prod}, that can be rewritten as follows
 \[
    C^{{\radp_2}^{\esp-1}{\radn_2}^{\esp}}_{\radp_2} \oplus \bigoplus_{i=1}^{\esp-1} \binom{\esp}{i} C^{\frac{1}{\lcm(\radp_1,\radp_2)}\cdot {\radp_1}^{i}{\radn_1}^i \cdot {\radp_2}^{\esp-i}{\radn_2}^{\esp-i}}_{\lcm(\radp_1,\radp_2)} =C^{\possn_2}_{\possp_2}
\]
    Recalling that 
    $\lcm(\possp_1,\possp_2) = \possp_2$, $\radp_1=\possp_1$, and $\radp_2=\possp_2$,  the latter equality is true  iff 
    \begin{equation*}
      {\possp_2}^{\esp-1}{\radn_2}^{\esp} +\sum_{i=1}^{\esp-1} \binom{\esp}{i} {\possp_1}^{i}{\radn_1}^i {\possp_2}^{\esp-i-1} \cdot {\radn_2}^{\esp-i}=\possn_2\enspace,
    \end{equation*}
    \ie, once both sides are first multiplied by $\possp_2$ and then added to the term 
    $(\possp_1\radn_1)^{\esp}$, iff
\begin{equation*}
   ( {\possp_1} {\radn_1} + {\possp_2}{\radn_2} )^{\esp} =\possp_2\possn_2 + (\possp_1\radn_1)^{\esp} \enspace.
    \end{equation*}
By $(i)$ and $(iii)$, we get 
\begin{equation*}
  \radn_2  =\frac{\sqrt[\leftroot{-2}\uproot{2}\esp]{\possp_2\possn_2 + \possp_1\possn_1} -  \sqrt[\leftroot{-2}\uproot{2}\esp]{\possp_1\possn_1}}{\possp_2} \enspace.
    \end{equation*}
\end{proof}
The following theorem explains how to compute $\radn_{i+1}$ and  $\radp_{i+1}$ once $\radn_{1}, \ldots, \radn_{i}$ and $\radp_{1}, \ldots, \radp_{i}$ are also known.
\begin{theorem}\label{th:calcolo-n_i-p_i}
Let $\mathring{x}=C^{\radn_1}_{\radp_1} \oplus  \ldots \oplus C^{\radn_l}_{\radp_l}$ be a solution of the equation $\mathring{x}^{\esp}=C^{\possn_1}_{\possp_1} \oplus  \ldots \oplus C^{\possn_h}_{\possp_h}$. 
For any fixed 
natural $i$ with $2\leq i < l$, if $\radn_1, \ldots, \radn_i$,  $\radp_1, \ldots, \radp_i$ are known and  $t\in\set{i,\ldots, h}$, $\possn'_1, \ldots, \possn'_t$, $\possp'_1 , \ldots, \possp'_t$ are positive integers such that $(C^{\radn_1}_{\radp_1} \oplus C^{\radn_2}_{\radp_2} \oplus \ldots \oplus C^{\radn_i}_{\radp_i})^\esp=C^{\possn'_1}_{\possp'_1} \oplus C^{\possn'_2}_{\possp'_2} \oplus \ldots \oplus C^{\possn'_t}_{\possp'_t}$, then the following facts hold:
\begin{enumerate}
\item[(1)]
$
\radp_{i+1}=\possp_{\xi_{i+1}}\enspace,
$ 
\smallskip

where $\xi_{i+1}=\min\left\{j \in \{1, \ldots,h\} \text{ with } \possp_j>\radp_i\, \left|\right. \, \possp_j>\possp'_t \lor  (\possp_j=\possp'_z \text{ for some } 1\leq z\leq t \text{ with } \possn'_z<\possn_j )\right\}$;
\item[(2)]
$
\radn_{i+1} =
\begin{cases}
 \begin{aligned}\frac{\sqrt[w]{{\possp}_{\xi_{i+1}} {\possn}_{\xi_{i+1}}+Q^{*}_i}-\sum_{j=1}^{i} \radp_j \radn_j}{\possp_{\xi_{i+1}}}, \end{aligned} & \text{if }  \lcm(\radp_1, \ldots, \radp_{i+1})=\radp_{i+1}, \\

\begin{aligned}\frac{\sqrt[w]{\possp_{\xi_{i+1}} \possn_{\xi_{i+1}} +Q^{**}_i}-\sum_{e=1}^{j-1} {\radp}_{i_e} {\radn}_{i_e}} {\possp_{\xi_{i+1}}},\end{aligned} & \text{otherwise}\enspace,
\end{cases}
$
\smallskip

where 

\smallskip
$
\begin{aligned}
Q^{*}_i &= \sum_{\substack{k_1+...+k_{i}=\esp \\ 0\leq k_1, \ldots , k_{i} \leq \esp \\ 
    \lambda^*_{i}\neq \radp_{i+1}
    } 
    }^{} \binom{\esp}{k_1, \ldots , k_i} 
    \prod_{t=1}^{i}(\radp_t \radn_t)^{k_t}
    \enspace,
\end{aligned}
$
\smallskip

with $\lambda^*_i$ as in Proposition~\ref{general_newt},
\smallskip


$
\begin{aligned}
Q^{**}_i
   = \sum_{\substack{k_{i_1}+\ldots+k_{i_{j-1}}=\esp \\ 0\leq k_{i_1}, \ldots , k_{i_{j-1}} \leq \esp \\ 
    {\lambda}^{**}_{i_{j-1}}\neq \radp_{i+1}
    } 
    }^{} \binom{\esp}{k_{i_1}, \ldots , k_{i_{j-1}}}  
    \prod_{t=1}^{j-1}(\radp_{i_t} \radn_{i_t})^{k_{i_t}}\enspace, 
\end{aligned}
$
\smallskip

and, regarding $Q^{**}_i$, the set $\{i_1, \ldots, i_j\}$ is the maximal subset of $\{1, \ldots, i+1\}$ such that $i_1<\ldots<i_j$, $i_j=i+1$, and  $\radp_{i_e}$ divides $\radp_{i+1}$ for each  $1\leq e \leq j$ (\ie, $\lcm(\radp_{i_1}, \ldots, \radp_{i_j})=\radp_{i+1}$), and where,   for each  $1\leq e \leq j$ and for any tuple $k_{i_1}, \ldots, k_{i_e}$, $\lambda^{**}_{i_e}$ denotes the $\lcm$ of those $p_{i_{\varepsilon}}$ with $\varepsilon\in\{1,\ldots, e\}$ and $k_{i_{\varepsilon}}\neq 0$ (while $\lambda^{**}_{i_e}=1$ iff all $k_{i_{\varepsilon}}= 0$).
\end{enumerate}
\end{theorem}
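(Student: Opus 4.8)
\textbf{Proof plan for Theorem~\ref{th:calcolo-n_i-p_i}.}

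The plan is to argue inductively in the spirit of Proposition~\ref{prop:base}, which already settles the cases $i=1$ and $i=2$; the present theorem is the general inductive step. Throughout I would fix the solution $\mathring{x}=C^{\radn_1}_{\radp_1}\oplus\ldots\oplus C^{\radn_l}_{\radp_l}$ and assume $\radn_1,\ldots,\radn_i$ and $\radp_1,\ldots,\radp_i$ already determined, together with the decomposition $(C^{\radn_1}_{\radp_1}\oplus\ldots\oplus C^{\radn_i}_{\radp_i})^\esp=C^{\possn'_1}_{\possp'_1}\oplus\ldots\oplus C^{\possn'_t}_{\possp'_t}$ supplied by Proposition~\ref{general_newt}. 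The first task is part~(1): identifying which length $\possp_{\xi_{i+1}}$ among the $\possp_j$ must equal $\radp_{i+1}$. Here I would use part~(i) of Proposition~\ref{prop:base} (so $\{\radp_1,\ldots,\radp_l\}\subseteq\{\possp_1,\ldots,\possp_h\}$ and $\radp_{i+1}>\radp_i$) together with the observation that $C^{\radn_{i+1}}_{\radp_{i+1}}$ raised to the $\esp$-th power contributes $C^{\,\cdot\,}_{\radp_{i+1}}$ to $\mathring{x}^\esp$ (taking the multi-index $k_{i+1}=\esp$). Since the ``already generated'' lengths are exactly $\possp'_1,\ldots,\possp'_t$ with multiplicities $\possn'_1,\ldots,\possn'_t$, and since any genuinely new cycle length, or any length whose current count $\possn'_z$ still falls short of the target $\possn_j$, must be produced by a summand of $\mathring{x}$ of length $\le$ that length, the smallest length still ``needing attention'' is forced to be $\radp_{i+1}$; this is precisely the minimum defining $\xi_{i+1}$. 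The argument that nothing smaller than this minimum can be $\radp_{i+1}$ mirrors the proof of (ii) in Proposition~\ref{prop:base}: if $\radp_{i+1}$ were a length already saturated, cycles of the next unsaturated length could never appear.

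For part~(2) I would isolate, inside the multinomial expansion of $(C^{\radn_1}_{\radp_1}\oplus\ldots\oplus C^{\radn_{i+1}}_{\radp_{i+1}})^\esp$ given by Proposition~\ref{general_newt}, exactly those terms $\binom{\esp}{k_1,\ldots,k_{i+1}}C^{\frac1{\lcmK}\prod(p_tn_t)^{k_t}}_{\lcmK}$ whose cycle length $\lcmK$ equals $\radp_{i+1}=\possp_{\xi_{i+1}}$. Summing the corresponding cycle counts must equal $\possn_{\xi_{i+1}}$. The two cases split according to whether $\lcm(\radp_1,\ldots,\radp_{i+1})=\radp_{i+1}$ (so that \emph{all} of $\radp_1,\ldots,\radp_{i+1}$ divide $\radp_{i+1}$, hence every multi-index $(k_1,\ldots,k_{i+1})$ with $k_{i+1}\ge 1$ and with the supported indices among $\{1,\ldots,i+1\}$ can yield $\lcmK=\radp_{i+1}$) or not (in which case only the sub-index set $\{i_1,\ldots,i_j\}$ of those $\radp_{i_e}$ dividing $\radp_{i+1}$ is relevant, and the summation collapses accordingly). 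In the first case the key algebraic manipulation is the same telescoping used for $\radn_2$ in Proposition~\ref{prop:base}(iv): multiply the count-equation by $\possp_{\xi_{i+1}}$ and recognise the result, after adding the ``missing'' term $Q^{*}_i$ coming from the multi-indices with $\lcmK\ne\radp_{i+1}$, as the $\esp$-th power $\bigl(\sum_{j=1}^{i}\radp_j\radn_j+\possp_{\xi_{i+1}}\radn_{i+1}\bigr)^\esp$; taking $\esp$-th roots and solving for $\radn_{i+1}$ gives the displayed formula. In the second case the same computation is carried out with $\{1,\ldots,i+1\}$ replaced by $\{i_1,\ldots,i_j\}$ and $Q^{*}_i$ replaced by $Q^{**}_i$, because only those summands contribute to length $\radp_{i+1}$.

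The main obstacle I anticipate is bookkeeping rather than conceptual: verifying that the ``remainder'' quantities $Q^{*}_i$ and $Q^{**}_i$ are \emph{exactly} what is needed to complete the partial sum $\sum\radp_j\radn_j$ (resp.\ $\sum\radp_{i_e}\radn_{i_e}$) into a perfect $\esp$-th power. Concretely, one must check the identity
\[
\sum_{\substack{k_1+\ldots+k_{i+1}=\esp\\ \lcmK=\radp_{i+1}}}\binom{\esp}{k_1,\ldots,k_{i+1}}\frac{1}{\radp_{i+1}}\prod_{t}(\radp_t\radn_t)^{k_t}
\;\cdot\;\radp_{i+1}
\;+\;Q^{*}_i
\;=\;\Bigl(\sum_{j=1}^{i+1}\radp_j\radn_j\Bigr)^{\esp},
\]
which amounts to reorganising the full multinomial expansion $\bigl(\sum_{j=1}^{i+1}\radp_j\radn_j\bigr)^\esp=\sum_{k_1+\ldots+k_{i+1}=\esp}\binom{\esp}{k_1,\ldots,k_{i+1}}\prod_t(\radp_t\radn_t)^{k_t}$ by partitioning the multi-indices according to the value of $\lcmK$ and checking that the $\lcmK=\radp_{i+1}$ part matches the contribution to $C^{\cdot}_{\radp_{i+1}}$ after multiplying by $\radp_{i+1}$ (using $\frac1{\lcmK}\prod(p_tn_t)^{k_t}$ from Proposition~\ref{general_newt}) while the remaining part is, by definition, $Q^{*}_i$ together with the terms already accounted for in $\sum_{j=1}^{i}\radp_j\radn_j$. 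The analogous identity with $\{i_1,\ldots,i_j\}$ handles the non-divisibility case, and one must additionally justify that multi-indices supported on an index $\radp_e$ \emph{not} dividing $\radp_{i+1}$ never produce $\lcmK=\radp_{i+1}$, which is immediate from the definition of $\lcm$. Finally, I would note that $\radn_{i+1}$ being a genuine solution requires the relevant $\esp$-th root to be a natural number and the resulting value to be non-negative; when it is not, the equation $\mathring{x}^\esp=C^{\possn_1}_{\possp_1}\oplus\ldots\oplus C^{\possn_h}_{\possp_h}$ has no solution with the prescribed initial segment, exactly as in Proposition~\ref{prop:base}.
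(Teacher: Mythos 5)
Your plan follows the paper's own proof essentially step for step: part (1) via the "smallest unsaturated cycle length" argument (the paper's cases of a length present with deficient count versus a length not yet generated), and part (2) by isolating the $\lambda^{*}=\radp_{i+1}$ terms of the multinomial expansion from Proposition~\ref{general_newt}, multiplying by $\possp_{\xi_{i+1}}$, adding the complementary sum $Q^{*}_i$ (resp.\ $Q^{**}_i$ after restricting to the indices whose $\radp_{i_e}$ divides $\radp_{i+1}$) to reconstitute the full expansion of $\bigl(\sum_t \radp_t\radn_t\bigr)^{\esp}$, and solving for $\radn_{i+1}$. The "key identity" you flag as the main obstacle is exactly the manipulation the paper carries out, so the proposal is correct and essentially identical in approach.
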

\begin{proof}
(1) We deal with the following two mutually exclusive cases \textit{a)} and \textit{b)}. 

\textit{Case a)}: for some $j\in\{1,\ldots, h\}$ the following condition holds: there exists $z\in\{1, \ldots, t\}$ such that $\possp_j=\possp'_z$ and $\possn'_z<\possn_j$. This means that, when the $w$-th power is performed, cycles from the part $(C^{\radn_1}_{\radp_1} \oplus \ldots \oplus C^{\radn_i}_{\radp_i})$ of the solution give rise to a number $\possn'_z$ of cycles of length  $\possp'_z=\possp_j$ where $\possn'_z$ is lower than the number $\possn_j$ of cycles of length $\possp_j$ that are expected once the $w$-th power of the whole solution is computed. Consider the minimum  among all the indexes $j$ satisfying the above introduced condition. It is clear that $\xi_{i+1}$ is just such a minimum and $o_{\xi_{i+1}}$ is the minimum among the values $o_j$ corresponding to those indexes $j$. Since by Corollary~\ref{lem:s1} and regarding each $j$ satisfying the above mentioned condition the $w$-th power of cycles of length $\possp'_z=\possp_j$ gives rise to cycles of length $\possp'_z$, by item $(i)$ of  Proposition~\ref{prop:base} $\radp_{i+1}$ comes from the set $\{o_1, \ldots, o_h\}$, 
and it is the successor of $\radp_{i}$, we get that $p_{i+1}$ can be nothing but $o_{\xi_{i+1}}$, 
or, equivalently, $i+1=\xi_{i+1}$. Indeed, according to Proposition~\ref{general_newt}, if cycles of length greater than $o_{\xi_{i+1}}$ were added to the part $(C^{\radn_1}_{\radp_1} \oplus \ldots \oplus C^{\radn_i}_{\radp_i})$ of the solution instead of cycles of length $\possp_{\xi_{i+1}}$, they would give rise to cycles of greater length, barring the generation of the missing cycles of length $\possp_{\xi_{i+1}}$.

\textit{Case b)}: there is no index $j\in\{1,\ldots, h\}$ satisfying the above mentioned condition.  Similar arguments from case \textit{a)} over the values $\possp_j$ and the corresponding indexes $j$ such that $\possp_j>\possp'_t$ lead to the conclusion that $\xi_{i+1}$ is the minimum of such indexes, $i+1=\xi_{i+1}$, and $\radp_{i+1}=\possp_{\xi_{i+1}}$.
\smallskip\\
(2) We deal with the following two mutually exclusive cases:

\textit{Case 2.1)}: $\lcm(\radp_1, \ldots, \radp_{i+1})=\radp_{i+1}$.     
By Proposition~\ref{general_newt}, we can write 
\begin{align*}
(C^{\radn_1}_{\radp_1} \oplus \ldots \oplus C^{\radn_{i+1}}_{\radp_{i+1}})^\esp
&=
\bigoplus\limits_{\substack{k_1+...+k_{i+1}=\esp\\0\leq k_1,\ldots,k_{i+1}  \leq \esp }}^{}\binom{\esp}{k_1,k_2,...,k_{i+1}}C^{\frac{1}{\lambda^*_{i+1}} \prod_{t=1}^{i+1} (\radp_t \radn_t)^{k_t}}_{\lambda^*_{i+1}}
\end{align*}

Among all the addends of the latter sum, only the ones with a multinomial coefficient defined by $k_1, \ldots, k_{i+1}$ such that $\lambda^*_{i+1}=\radp_{i+1}$ give rise to cycles of length $\radp_{i+1}$, where $\radp_{i+1}=\possp_{\xi_{i+1}}$. In particular, it holds that
\[
\bigoplus\limits_{\substack{k_1+...+k_{i+1}=\esp\\0\leq k_1,\ldots,k_{i+1}  \leq \esp\\ \lambda^*_{i+1}=\radp_{i+1} }}^{}\binom{\esp}{k_1,\ldots,k_{i+1}}C^{\frac{1}{\lambda^*_{i+1}} \prod_{t=1}^{i+1} (\radp_t \radn_t)^{k_t}}_{\lambda^*_{i+1}}= C^{\possn_{\xi_{i+1}}}_{\possp_{\xi_{i+1}}}\enspace, 
\]
and, hence, 
\begin{equation}
\label{pezzoprincipale}
\sum_{\substack{k_1+\ldots+k_{i+1}=\esp \\ 0\leq k_1,k_2, \ldots , k_{i+1} \leq \esp \\ 
    \lambda^*_{i+1}=\radp_{i+1}
    } 
    }^{} \binom{\esp}{k_1, \ldots , k_{i+1}} \cdot \frac{1}{\lambda^*_{i+1}} \cdot \prod_{t=1}^{i+1} (\radp_t \radn_t)^{k_t}= \possn_{\xi_{i+1}}\enspace.
\end{equation}
Since $\lambda^*_{i+1}=\possp_{\xi_{i+1}}$, 
when both sides of~Equation~\eqref{pezzoprincipale} are first multiplied by $ \possp_{\xi_{i+1}}$ and then summed to the quantity
\begin{equation*}
\begin{aligned}
\sum_{\substack{k_1+\ldots +k_{i+1}=\esp \\ 0\leq k_1,\ldots , k_{i+1} \leq \esp \\ 
    \lambda^*_{i+1}\neq \radp_{i+1}\, \wedge \, k_{i+1}=0
    } 
    }^{} \binom{\esp}{k_1,\ldots , k_{i+1}}   \prod_{t=1}^{i+1} (\radp_t \radn_t)^{k_t}=
\sum_{\substack{k_1+\ldots +k_{i}=\esp \\ 0\leq k_1,\ldots , k_{i} \leq \esp \\ 
    \lambda^*_{i}\neq \radp_{i+1}
    } 
    }^{} \binom{\esp}{k_1,\ldots , k_i}  \prod_{t=1}^{i} (\radp_t \radn_t)^{k_t} = Q^{*}_i
   \enspace,
\end{aligned}
\end{equation*}
Equation~\eqref{pezzoprincipale} becomes 
\begin{equation}
\label{sommatotale}
\sum_{\substack{k_1+\ldots+k_{i+1}=\esp \\ 0\leq k_1, \ldots , k_{i+1} \leq \esp    
    } 
    }^{} \binom{\esp}{k_1, \ldots , k_{i+1}} \prod_{\substack{t=1}}^{i+1} \radp_t^{k_t} \radn_t^{k_t}  = \possp_{\xi_{i+1}} \possn_{\xi_{i+1}} + Q^{*}_i\enspace.
 \end{equation}
Indeed, by the assumption that $\lcm(\radp_1, \ldots, \radp_{i+1})=\radp_{i+1}$, there can be  no tuple $(k_1, \ldots , k_{i+1})$ from the sum of Equation~\eqref{sommatotale} such that both the conditions $k_{i+1}\neq 0$ and $\lambda^*_{i+1}\neq \radp_{i+1}$ hold.
Now, Equation~\eqref{sommatotale} can be rewritten as 
\[
(\radp_1 \radn_1 +\ldots \radp_i \radn_i + \radp_{i+1} \radn_{i+1})^{\esp}= \possp_{\xi_{i+1}} \possn_{\xi_{i+1}}+Q^{*}_i\enspace.
\]
Since $Q^{*}_i$ does not depend on $\radn_{i+1}$, and, in particular, $Q^{*}_i$ can be computed on the basis of  $\radn_{1}, \ldots, \radn_{i}$,  we get 
\[
\radn_{i+1} =\frac{\sqrt[w]{\possp_{\xi_{i+1}} \possn_{\xi_{i+1}}+Q^{*}_i}-\sum_{j=1}^{i} \radp_j \radn_j}{\possp_{\xi_{i+1}}}
\]

\textit{Case 2.2)}: $\lcm(\radp_1, \ldots, \radp_{i+1})>\radp_{i+1}$.  
When computing the $w$-th power of $\mathring{x}$ the set $C^{\possn_\xi}_{\possp_\xi} = C^{\possn_{\xi}}_{\radp_{i+1}}$ can be formed only by the contribution of those sets $C^{\radn_{i_1}}_{\radp_{i_1}}$, \ldots, $C^{\radn_{i_j}}_{\radp_{i_j}}$ (including $C^{\radn_{i+1}}_{\radp_{i+1}}$)  such that $i_1<\ldots<i_j$, $i_j=i+1$, and $\radp_{i_e}$ divides $\radp_{i+1}$ for each  $1\leq e \leq j$. Since $\lcm(\radp_{i_1}, \ldots, \radp_{i_j})=\radp_{i+1}$, we can proceeding in the same way as the case \textit{2.1)} but with the indexes $i_1$, \ldots, $i_j$ instead of $1$, \ldots, $i+1$, respectively. 
Therefore, it holds that
\begin{equation}
\label{sommatotale2}
\sum_{\substack{k_{i_1}+\ldots+k_{i_j}=\esp \\ 0\leq k_{i_1}, \ldots , k_{i_j} \leq \esp    
    } 
    }^{} \binom{\esp}{k_{i_1}, \ldots , k_{i_j}} \prod_{\substack{t=1}}^{j} {\radp}_{i_t}^{k_{i_t}} {\radn}_{i_t}^{k_{i_t}}  = \possp_{\xi_{i+1}} \possn_{\xi_{i+1}} + Q^{**}_i\enspace,
 \end{equation}
 where 
 \begin{equation*}
\begin{aligned}
Q^{**}_i
   = \sum_{\substack{k_{i_1}+\ldots+k_{i_{j-1}}=\esp \\ 0\leq k_{i_1}, \ldots , k_{i_{j-1}} \leq \esp \\ 
    {\lambda}^{**}_{i_{j-1}}\neq \radp_{i+1}
    } 
    }^{} \binom{\esp}{k_{i_1}, \ldots , k_{i_{j-1}}} 
    \prod_{t=1}^{j-1}(\radp_{i_t} \radn_{i_t})^{k_{i_t}}\enspace, 
\end{aligned}
\end{equation*}
and, hence, Equation~\eqref{sommatotale2} can be rewritten as 
\[
(\radp_{i_1} \radn_{i_1} +\ldots {\radp}_{i_{j-1}} {\radn}_{i_{j-1}} + {\radp}_{i_j} {\radn}_{i_j})^{\esp}= \possp_{\xi_{i+1}} \possn_{\xi_{i+1}}+Q^{**}_i\enspace.
\]
Since $Q^{**}_i$ does not depend on $\radn_{i+1}={\radn}_{i_j}$, and, in particular, $Q^{**}_i$ can be computed on the basis of  ${\radn}_{i_1}, \ldots, {\radn}_{i_{j-1}}$,  we get 
\[
\radn_{i+1} =
\begin{aligned}\frac{\sqrt[w]{\possp_{\xi_{i+1}} \possn_{\xi_{i+1}} +Q^{**}_i}-\sum_{e=1}^{j-1} {\radp}_{i_e} {\radn}_{i_e}} {\possp_{\xi_{i+1}}},\end{aligned}
\]



\end{proof}
At this point it is clear that the $w$-th root of a DDS
is always unique, if it exists (\ie, if all $n_i$'s turn out to be natural numbers). 

\section{Intersection between abstractions}\label{sec:intersection}
\definecolor{ared}{RGB}{170, 7, 7}
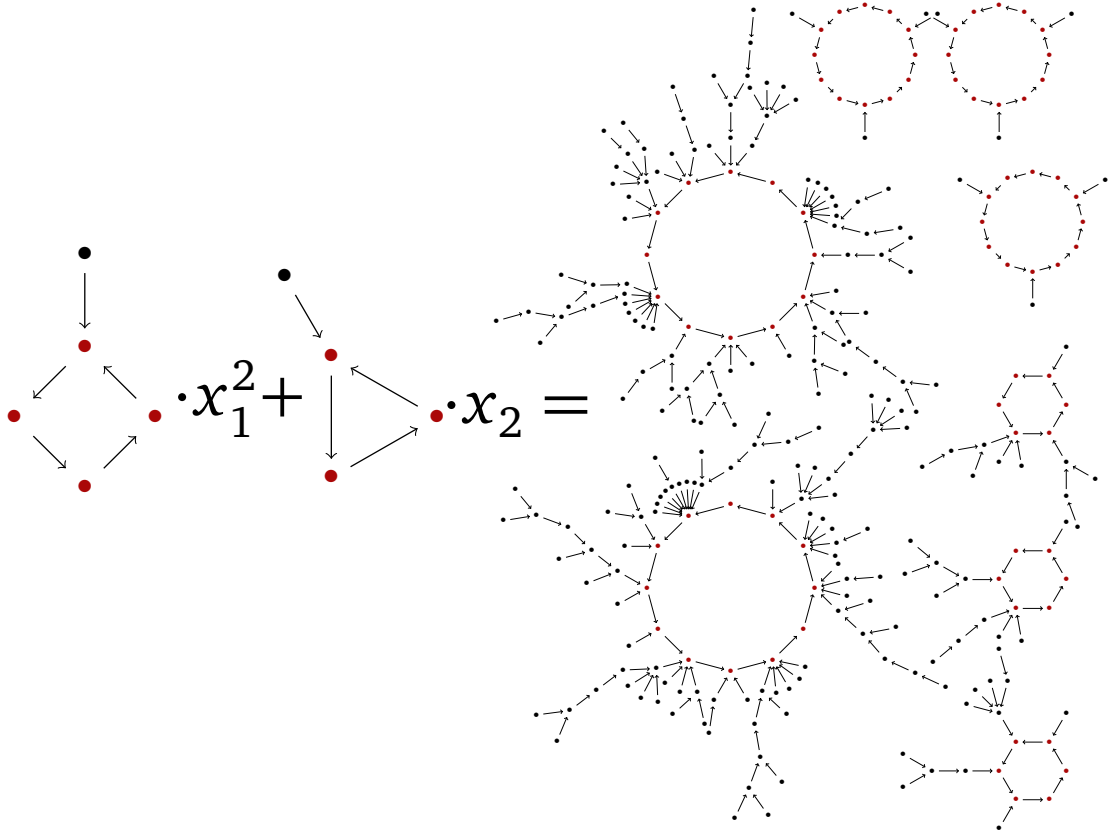
\begin{figure}[htb]
\begin{tikzpicture}
\node[] at (-10,0){\space
\resizebox{\textwidth/2}{!}{%
\begin{tikzpicture}
    \node[draw=none,fill=none, scale=2, anchor=south] (0) at (-2.35,-.455) {$\cdot x_1^2+$};
    \node[draw=none,fill=none, scale=2, anchor=south] (1) at (.6,-.455) {$\cdot x_2=$}; 
    \node[text=white](a) at (0,5.5){$\bullet$};
    \node[text=white](a) at (0,-5.5){$\bullet$}; 
    \begin{oodgraph}
        \addcycle[xshift=-4cm,nodes prefix = a, color=ared,radius=0.75cm]{4};
        \addbeard[attach node = a->2]{1};
        
        \addcycle[nodes prefix = b, color=ared,radius=0.75cm,xshift=-1cm]{3};
        \addbeard[attach node = b->2]{1};
    \end{oodgraph}
\end{tikzpicture}
}};
\node[] at (-3.5,0){\space
\resizebox{\textwidth/2}{!}{%
\begin{tikzpicture}
	\begin{oodgraph}
	    \addcycle[xshift=8.00cm, yshift=6cm, nodes prefix = a, color=ared, radius=1.5cm]{12};
		\addbeard[attach node = a->2]{1};
		\addbeard[attach node = a->6]{1};
		\addbeard[attach node = a->10]{1};

		\addcycle[xshift=4cm, yshift=6cm, nodes prefix = b, color=ared, radius=1.5cm]{12};
		\addbeard[attach node = b->2]{1};
		\addbeard[attach node = b->6]{1};
		\addbeard[attach node = b->10]{1};

		\addcycle[xshift=9cm, yshift=1cm, nodes prefix = c, color=ared, radius=1.5cm]{12};
		\addbeard[attach node = c->2]{1};
		\addbeard[attach node = c->6]{1};
		\addbeard[attach node = c->10]{1};

		\addcycle[xshift=9cm, yshift=-4.5cm, nodes prefix = d, color=ared]{6};
		\addbeard[attach node = d->2]{1};
		\addbeard[attach node = d->5]{3};
		\addbeard[attach node = d->6]{1};
		\addbeard[attach node = d->5->1]{2};
		\addbeard[attach node = d->6->1]{2};
		\addbeard[attach node = d->5->1->1]{1};
		\addbeard[attach node = d->6->1->1]{1};

		\addcycle[xshift=9cm, yshift=-9.75cm, nodes prefix = e, color=ared]{6};
		\addbeard[attach node = e->2]{1};
		\addbeard[attach node = e->4]{1};
		\addbeard[attach node = e->5]{3};
		\addbeard[attach node = e->4->1]{2};
		\addbeard[attach node = e->5->1]{1};
		\addbeard[attach node = e->4->1->1]{2};
		\addbeard[attach node = e->5->1->1]{1};

		\addcycle[xshift=9cm, yshift=-15.5cm, nodes prefix = f, color=ared]{6};
		\addbeard[attach node = f->2]{1};
		\addbeard[attach node = f->3]{1};
		\addbeard[attach node = f->4]{1};
		\addbeard[attach node = f->5]{1};
		\addbeard[attach node = f->3->1]{4};
		\addbeard[attach node = f->4->1]{1};
		\addbeard[attach node = f->3->1->1]{1};
		\addbeard[attach node = f->4->1->1]{2};

	    \addcycle[nodes prefix = g, radius=2.5cm, color=ared]{12};
		\addbeard[attach node = g->1]{1};
		\addbeard[attach node = g->2]{7};
		\addbeard[attach node = g->4]{3};
		\addbeard[attach node = g->5]{3};
		\addbeard[attach node = g->6]{3};
		\addbeard[attach node = g->8]{7};
		\addbeard[attach node = g->9]{1};
		\addbeard[attach node = g->10]{3};
		\addbeard[attach node = g->11]{1};
		\addbeard[attach node = g->12]{3};
		\addbeard[attach node = g->1->1]{1};
		\addbeard[attach node = g->2->1]{2};
		\addbeard[attach node = g->4->1,rotation angle=-15]{1};
		\addbeard[attach node = g->4->2]{1};
		\addbeard[attach node = g->5->1]{1};
		\addbeard[attach node = g->6->1]{4};
		\addbeard[attach node = g->8->1, rotation angle=-15]{1};
		\addbeard[attach node = g->8->2]{1};
		\addbeard[attach node = g->9->1]{2};
		\addbeard[attach node = g->10->1]{2};
		\addbeard[attach node = g->12->1,rotation angle=-20]{2};
		\addbeard[attach node = g->12->2]{2};
		\addbeard[attach node = g->1->1->1]{2};
		\addbeard[attach node = g->2->1->1]{1};
		\addbeard[attach node = g->2->1->2]{1};
		\addbeard[attach node = g->4->1->1]{4};
		\addbeard[attach node = g->4->2->1]{2};
		\addbeard[attach node = g->5->1->1]{1};
		\addbeard[attach node = g->6->1->1]{1};
		\addbeard[attach node = g->6->1->2]{1};
		\addbeard[attach node = g->8->1->1]{2};
		\addbeard[attach node = g->8->2->1]{1};
		\addbeard[attach node = g->9->1->1]{1};
		\addbeard[attach node = g->10->1->1]{2};
		\addbeard[attach node = g->10->1->2]{2};
		\addbeard[attach node = g->12->1->1]{2};
		\addbeard[attach node = g->12->2->1]{1};
		\addbeard[attach node = g->4->2->1->1]{1};
		\addbeard[attach node = g->8->2->1->1]{2};
		\addbeard[attach node = g->12->2->1->1]{1};
		\addbeard[attach node = g->4->2->1->1->1]{1};
		\addbeard[attach node = g->8->2->1->1->1]{1};
		\addbeard[attach node = g->12->2->1->1->1]{2};
		
		\addcycle[yshift=-10.00cm, nodes prefix = h, radius=2.5cm, color=ared]{12};
		\addbeard[attach node = h->1]{4};
		\addbeard[attach node = h->2]{5};
		\addbeard[attach node = h->3]{2};
		\addbeard[attach node = h->5]{9};
		\addbeard[attach node = h->6]{2};
		\addbeard[attach node = h->7]{2};
		\addbeard[attach node = h->8]{1};
		\addbeard[attach node = h->9]{4};
		\addbeard[attach node = h->10]{2};
		\addbeard[attach node = h->11]{5};
		\addbeard[attach node = h->1->1]{2};
		\addbeard[attach node = h->1->3]{1};
		\addbeard[attach node = h->2->2]{1};
		\addbeard[attach node = h->3->1]{4};
		\addbeard[attach node = h->5->1,rotation angle=-45]{2};
		\addbeard[attach node = h->5->5]{1};
		\addbeard[attach node = h->6->1]{2};
		\addbeard[attach node = h->7->1]{2};
		\addbeard[attach node = h->9->1]{4};
		\addbeard[attach node = h->9->4]{2};
		\addbeard[attach node = h->10->1]{1};
		\addbeard[attach node = h->11->1]{2};
		\addbeard[attach node = h->1->1->1]{2};
		\addbeard[attach node = h->3->1->2]{1};
		\addbeard[attach node = h->5->1->1,rotation angle=-45]{1};
		\addbeard[attach node = h->7->1->1]{2};
		\addbeard[attach node = h->9->1->1]{1};
		\addbeard[attach node = h->11->1->1]{1};
		\addbeard[attach node = h->1->1->1->1]{1};
		\addbeard[attach node = h->3->1->2->1]{1};
		\addbeard[attach node = h->5->1->1->1,rotation angle=-45]{2};
		\addbeard[attach node = h->7->1->1->1]{1};
		\addbeard[attach node = h->9->1->1->1]{1};
		\addbeard[attach node = h->11->1->1->1]{2};
		\addbeard[attach node = h->1->1->1->1->1]{1};
		\addbeard[attach node = h->3->1->2->1->1]{4};
		\addbeard[attach node = h->5->1->1->1->1,rotation angle=-45]{1};
		\addbeard[attach node = h->7->1->1->1->1]{2};
		\addbeard[attach node = h->9->1->1->1->1]{2};
		\addbeard[attach node = h->11->1->1->1->1]{2};
	\end{oodgraph}
\end{tikzpicture}
}}; 
\end{tikzpicture}
\vspace{-1cm}
\captionof{figure}{An example of Equation~\ref{eq:problemSolvedOriginal}. The coefficients $a_1$, $a_2$   and the know term $b$ are depicted by their dynamics graphs.}
\label{fig:my_eq}
\end{figure}
Once considered both the c-abstraction and a-abstraction of  Equation~\eqref{eq:problemSolvedOriginal} and provided the two corresponding solution sets, the final step to perform - that we name intersection between abstractions - is combining each solution from the first set with each solution from the second one to establish what resulting pairs lead to a possible solution of  Equation~\eqref{eq:problemSolvedOriginal}. In other words, 
$(x_1, \ldots, x_{\qtvar})$ is a solution candidate  of  Equation~\eqref{eq:problemSolvedOriginal} if each of the tuples  $(|x_1| 
,\ldots , |x_{\qtvar}|)$ and $(\mathring{x_1}, \ldots, \mathring{x}_{\qtvar})$ belongs  to the solution set of the c-abstraction and a-abstraction equation, respectively. Moreover, a solution of the c-abstraction equation can be combined with one of the a-abstraction equation, if for every $i$ the total number of periodic points of $\mathring{x_i}$ is at most $|x_i|$.  Let us illustrate such a final step by the following example.
\begin{example}
\label{ex:final}
Consider the equation 
\[
a_1\cdot x_1^2 + a_2\cdot x_2=b 
\]
where $a_1$, $a_2$, and $b$ are as in Figure~\ref{fig:my_eq}. The corresponding c-abstraction and a-abstraction equations are
\[
5\cdot |x_1|^2 + 4 \cdot |x_2|= 293 \enspace, 
\]
and
\[C^1_4 \odot \mathring{x_1}^2 \oplus C^1_3 \odot \mathring{x_2} = C^3_6 \oplus C^5_{12}\enspace,
\]
respectively. At this point, we aim at enumerating the solutions of both the abstraction equations.
Regarding the c-abstraction one, the MDD of Figure~\ref{fig:mddnodesS5} provides the following solutions:
 \[|x_1|=7,|x_2|=12\]
 \[|x_1|=5,|x_2|=42\]
 \[|x_1|=3,|x_2|=62\]
 \[|x_1|=1,|x_2|=72\]
 
 \begin{figure}[htb]
  \begin{center}
	\begin{tikzpicture}[scale=.3,-latex,auto, semithick , state/.style ={circle,draw,minimum width=8mm},font=\footnotesize]
	\node[state,scale=.65] (r) at (0,0) {$r$};
	\node[state,scale=.65] (1=245) at (-4.5,-4) {$245$};
	\node[state,scale=.65] (1=125) at (-1.5,-4) {$125$};
	\node[state,scale=.65] (1=45) at (1.5,-4) {$45$};
	\node[state,scale=.65] (1=5) at (4.5,-4) {$5$};
    \node[state,scale=.65] (tt) at (0,-8) {$tt$};
    
    \draw [->]  (r) to node[left,scale=.65] {$7$} (1=245);
    \draw [->]  (r) to node[left,scale=.65] {$5$} (1=125);
    \draw [->]  (r) to node[right,scale=.65] {$3$} (1=45);
    \draw [->]  (r) to node[right,scale=.65] {$1$} (1=5);
    \draw [->]  (1=245) to node[left,scale=.65] {$12$} (tt);
    \draw [->]  (1=125) to node[left,scale=.65] {$42$} (tt);
    \draw [->]  (1=45) to node[right,scale=.65] {$62$} (tt);
    \draw [->]  (1=5) to node[right,scale=.65] {$72$} (tt);
    
    \end{tikzpicture}
  \end{center}
	\caption{The reduced MDD representing all the solutions of $5 \cdot |x_1|^2 + 4 \cdot |x_2| =293$. There are $\qtvar=2$ variables. The first level and the corresponding outgoing edges represent the variable $|x_1|$ and its possible values. The second level and the outgoing edges represent $|x_2|$.}
	\label{fig:mddnodesS5}
\end{figure}
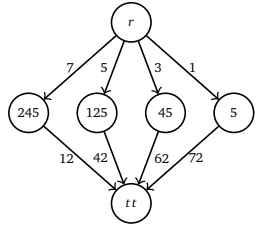

As far the a-abstraction equation is concerned, there are 16 basic equations and, according to the necessary ones, the MDD $CS$ of  Figure~\ref{fig:csS5} provide all the feasible way by which the two monomials  $C_4^1 \odot X_1$  and $C^1_3 \odot X_2$ can concur to form $C^3_6 \oplus C^5_{12}$, where  $X_1 = \mathring{x_1}^2$ and $X_2=\mathring{x}_2$.
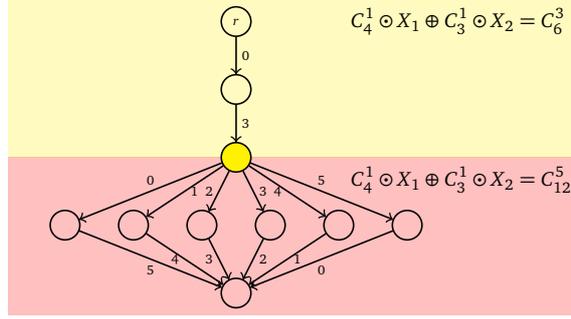
\begin{figure}[t]
  \begin{center}
	\begin{tikzpicture}[scale=.3,-latex,auto, semithick , state/.style ={circle,draw,minimum width=6mm},font=\footnotesize]
	\path [fill=yellow, nearly transparent] (-10,1) rectangle (15,-6);
    \path [fill=red, nearly transparent] (-10,-6) rectangle (15,-13);
	\node[state,scale=.65] (r) at (0,0) {$r$};
	\node[text width=3cm] at (10,0) 
    {$C_4^1\odot X_1 \oplus  C_3^1 \odot X_2=C_6^3$};
    \node[text width=3cm] at (10,-7) 
    {$C_4^1\odot X_1 \oplus  C_3^1 \odot X_2=C_{12}^5$};
	\node[state,scale=.65] (1) at (0,-3) {};
	\node[state,scale=.65,fill=yellow] (tt1) at (0,-6) {};
    \node[state,scale=.65] (2=2) at (-1.5,-9) {};
    \node[state,scale=.65] (2=3) at (1.5,-9) {};
    \node[state,scale=.65] (2=1) at (-4.5,-9) {};
    \node[state,scale=.65] (2=0) at (-7.5,-9) {};
    \node[state,scale=.65] (2=4) at (4.5,-9) {};
    \node[state,scale=.65] (2=5) at (7.5,-9) {};
    \node[state,scale=.65] (tt2) at (0,-12) {};

    \draw [->]  (r) to node[right,scale=.65] {$0$} (1);
    \draw [->]  (1) to node[right,scale=.65] {$3$} (tt1);
    \draw [->]  (tt1) to node[above,scale=.65] {$0$} (2=0);
    \draw [->]  (tt1) to node[right,scale=.65] {$1$} (2=1);
    \draw [->]  (tt1) to node[left,scale=.65] {$2$} (2=2);
    \draw [->]  (tt1) to node[right,scale=.65] {$3$} (2=3);
    \draw [->]  (tt1) to node[left,scale=.65] {$4$} (2=4);
    \draw [->]  (tt1) to node[above,scale=.65] {$5$} (2=5);
    \draw [->]  (2=0) to node[below,scale=.65] {$5$} (tt2);
    \draw [->]  (2=1) to node[left,scale=.65] {$4$} (tt2);
    \draw [->]  (2=2) to node[left,scale=.65] {$3$} (tt2);
    \draw [->]  (2=3) to node[right,scale=.65] {$2$} (tt2);
    \draw [->]  (2=4) to node[right,scale=.65] {$1$} (tt2);
    \draw [->]  (2=5) to node[below,scale=.65] {$0$} (tt2);

	\end{tikzpicture}
  \end{center}
	\caption{
	The MDD $CS=CS_1\times CS_2$ represents all the feasible ways by which, according to the set of necessary equations, the monomials of the a-abstraction equation from Example~\ref{ex:final} can concur to form its right-hand side. According to the cartesian product of MDD, the yellow node is at the same time the \ttnode node of $CS_1$ and the \rootnode node of  $CS_{2}$. In each of the two MDD, the first (resp., second) level corresponds to the monomial $C_4^1\odot X_1$ (resp., $C_3^1 \odot X_2$). The values $val(\alpha)$ associated to nodes are omitted for simplicity.}
	\label{fig:csS5}
\end{figure}
Namely, only the monomial $C^1_3 \odot \mathring{x}_2$ contributes to form $C^3_6$ (see $CS_1$), while there are several ways by which both of them contribute to form $C^5_{12}$. If among the necessary equations involving $X_1=\mathring{x}_1^2$, one considers only those admitting a non empty set of solutions $x_1$, \ie, after the computation of the square root of the values of $X_1$ has been performed too, the following two feasible Systems~\eqref{systelcontr} remain:
\[
\begin{cases}
C_3^1 \odot \mathring{x}_2=C^3_6 \oplus C^5_{12}
\end{cases}
\]
\[
\begin{cases}
C_4^1 \odot \mathring{x_1}^2=C^3_{12}\\
C_3^1 \odot \mathring{x}_2=C^3_6 \oplus C^2_{12}
\end{cases}
\]
In both cases, the values of $X_2=\mathring{x}_2$ are computed by a Cartesian products of SB-MDDs.
Due to the form of $\mathring{a}_1$ and $\mathring{a}_2$ and the fact that the two monomials contain distinct variables, in each system there are no equations involving the same variable. Hence, no intersection operation between solutions of   equations is required. The solutions of the  a-abstraction equation are: 
\begin{align*}
    \mathring{x_1}=C^1_3 & , \mathring{x_2}=C^1_6 \oplus C^2_4\\
\mathring{x_1}=C^1_3 & , \mathring{x_2}=C^3_2 \oplus C^2_4\\
\mathring{x_1}=\text{\MVZero} & , \mathring{x_2}=C^1_6 \oplus C^1_{12} \oplus C^2_4\\
\mathring{x_1}=\text{\MVZero} & , \mathring{x_2}=C^1_6 \oplus C^5_4\\
\mathring{x_1}=\text{\MVZero} & , \mathring{x_2}=C^3_2 \oplus C^1_{12} \oplus C^2_4\\
\mathring{x_1}=\text{\MVZero} & , \mathring{x_2}=C^3_2 \oplus C^5_4
\end{align*}
Some solutions $(\mathring{x_1}, \mathring{x_2})$ of the a-abstraction equation can be coupled to no solution $(|x_1|,|x_2|)$ of the c-abstraction equation to lead a solution of the given original equation. Namely, by the solutions of the c-abstraction equation, $x_1$ necessarily has at least one state. Therefore, the only possible value of $\mathring{x_1}$ is $C^1_3$. This implies that $x_1$ must have at least 3 states and $|x_2| \geq 14$ (since $\mathring{x_2}$ consists of $14$ periodic points). Then, the solutions $(|x_1|=1,|x_2|=72)$ and $(|x_1|=7,|x_2|=12)$ of the c-abstraction equation can not be coupled with any solution of the c-abstraction equation. 
This process leads to the identification of the following candidate solutions of the given original equation:
\[(x_1,x_2) \in \ring^2 \text{ s.t. }(\mathring{x_1}=C^1_3) \text{ and } (\mathring{x_2} \in \set{C^1_6 \oplus C^2_4,C^3_2 \oplus C^2_4}) \text{ and } ((|x_1|=3 \land |x_2|=62) \text{ or } (|x_1|=5 \land |x_2|=72))\]

\end{example}

\section{Conclusion}
This paper presents a complete algorithmic pipeline
for solving both the $c$- and $a$-abstractions of polynomial
equations (with constant right-hand term) over DDS.
The pipeline includes a number of subtleties  
allowing reasonable performances that are compatible with
practical applications.

Devising an algorithm that solves  in an efficient way the $t$-abstraction of an equation over DDS is certainly the  main step for further researches concerning this subject. Actually, 
this is a rather complex task. 

A further interesting research direction consists in
trying to understand the precise computational
complexity of problems that arise when considering the different tasks of the pipeline. For example, what is the computational
complexity of establishing whether  a basic equation has solutions? It is clear
that the problem is in \NP\ but we conjecture that in
fact it is in \P. Along the same line of thoughts,
one finds that the problem of enumerating the solutions of a basic equation is in \EnumP\, but
is it complete for this class? Now, stepping to
the more complex problem of deciding whether
an $a$-abstraction equation admits a solution, what is precisely its complexity class?

\bibliographystyle{elsarticle-harv}
\bibliography{biblio.bib}

\end{document}